\newtheorem{theorem}{Theorem}
\newtheorem{definition}{Definition}
\newtheorem{remark}{Remark}
\newtheorem{lemma}{Lemma}
\newtheorem{proposition}{Proposition}
\numberwithin{equation}{section}
\renewenvironment{proof}{\smallskip\noindent\emph{\textbf{Proof.}}%
  \hspace{1pt}}{\hspace{-5pt}{\nobreak\quad\nobreak\hfill\nobreak%
    $\square$\vspace{2pt}\par}\smallskip\goodbreak}
\newenvironment{proofof}[1]{\smallskip\noindent{\textbf{Proof~of~#1.}}%
  \hspace{1pt}}{\hspace{-5pt}{\nobreak\quad\nobreak\hfill\nobreak%
    $\square$\vspace{2pt}\par}\smallskip\goodbreak}
\newcommand{\C}[1]{\mathbf{C^{#1}}}
\newcommand{\Cc}[1]{\mathbf{C}_c^{#1}}
\renewcommand{\L}[1]{\mathbf{L^#1}}
\newcommand{\Lloc}[1]{\mathbf{L^{#1}_{loc}}}
\newcommand{\W}[2]{{\mathbf{W}^{#1,#2}}}
\newcommand{\modulo}[1]{{\left|#1\right|}}
\newcommand{\norma}[1]{{\left\|#1\right\|}}
\newcommand{\caratt}[1]{{\chi_{\strut#1}}}
\newcommand{\R}{\mathbb{R}}
\newcommand{\reali}{\mathbb{R}}
\newcommand{\N}{\mathbb{N}}
\newcommand{\Z}{\mathbb{Z}}
\newcommand{\interi}{\mathbb{Z}}
\renewcommand{\epsilon}{\varepsilon}
\renewcommand{\phi}{\varphi}
\renewcommand{\theta}{\vartheta}
\newcommand{\tv}{\mathinner{\rm TV}}
\renewcommand{\d}[1]{\mathinner{\mathrm{d}{#1}}}
\newcommand{\del}{\partial}
\newcommand{\be}{\begin{equation}}
\newcommand{\ee}{\end{equation}}
\definecolor{ffqqqq}{rgb}{1.,0.,0.}
\definecolor{uuuuuu}{rgb}{0.26666666666666666,0.26666666666666666,0.26666666666666666}
\DeclareMathOperator{\sgn}{sgn}
\let\@fnsymbol\@arabic
\title{Stability estimates for non-local scalar conservation laws}
\author{
\textsc{Felisia Angela Chiarello\footnotemark[1]}
\and
\textsc{Paola Goatin\footnotemark[1]}
\and
\textsc{Elena Rossi\footnotemark[1]}
}
\date{ }
\begin{document}
\maketitle

\footnotetext[1]{Inria Sophia Antipolis - M\'editerran\'ee,
  Universit\'e C\^ote d'Azur, Inria, CNRS, LJAD, 2004 route des
  Lucioles - BP 93, 06902 Sophia Antipolis Cedex, France. E-mail:
  \texttt{\{felisia.chiarello, paola.goatin, elena.rossi\}@inria.fr}}

\begin{abstract}
  \noindent

  We prove the stability of entropy weak solutions of a class of
  scalar conservation laws with non-local flux arising in traffic
  modelling. We obtain an estimate of the dependence of the solution
  with respect to the kernel function, the speed and the initial
  datum. Stability is obtained from the entropy condition through
  doubling of variable technique. We finally provide some numerical
  simulations illustrating the dependencies above for some cost
  functionals derived from traffic flow applications.

  \medskip

  \noindent\textit{2010~Mathematics Subject Classification: 35L65, 35L60, 35L04, 90B20 }

  \medskip

  \noindent\textbf{Key words:} Scalar conservation laws; Non-local
  flux; Stability.

\end{abstract}




\section{Introduction}
\label{sec:Intro}

Conservation laws with non-local flux have drawn growing attention in
the recent years.  Indeed, beside the intrinsic mathematical interest
for their properties, they turned out to be suitable for modelling
several phenomena arising in natural or engineering sciences: flux
functions depending on space-integrals of the unknown appear for
example in models for granular flows \cite{AmadoriShen2012},
sedimentation \cite{Betancourt2011}, supply chains \cite{Keimer2014},
conveyor belts \cite{Gottlich2014}, weakly coupled oscillators
\cite{AHP2016}, structured populations
dynamics~\cite{Perthame_book2007} and traffic
flows~\cite{BlandinGoatin2016, ColomboGaravelloMercier2012,
  SopasakisKatsoulakis2006}.

For this type of equations, general existence and uniqueness results
have been established in~\cite{AmorimColomboTexeira, ChiarelloGoatin} for specific
classes of scalar equations in one space-dimension, and
in~\cite{ACG2015} for multi-dimensional systems of equations coupled
through the non-local term. In particular, existence is usually proved
by providing suitable compactness estimates on a sequence of
approximate solutions constructed by finite volume schemes, while
$\L1$-stability on initial data is obtained from Kru{\v{z}}kov-type
entropy conditions through the doubling of variable
technique~\cite{Kruzkov}. A different approach based on fixed-point
techniques has been recently proposed in~\cite{KeimerPflug2017} to
prove existence and uniqueness of solutions to scalar balance laws in
one space dimension, whose velocity term depends on the weighted
integral of the density over an area in space.

In this paper, we focus on a specific class of scalar equations, in
which the integral dependence of the flux function is expressed though
a convolution product.  We consider the following Cauchy problem
\begin{equation}\label{eq:1}
  \begin{cases}
    \partial_t \rho + \partial_x \left( f (t,x,\rho) V (t,x) \right)=0
    & t>0, \, x \in \R,
    \\
    \rho(0,x)=\rho_o(x), & x \in \R,
  \end{cases}
\end{equation}
where $V(t,x)=v\left((\rho(t) * w) (x) \right)$, and $w$ is a smooth
mollifier:
\begin{equation*}
  (\rho(t) * w) (x) =\int_{\R} \rho(t,y) \, w(x-y) \d{y}.
\end{equation*}
Here and below, we set $\rho(t):=\rho(t,\cdot)$ the function
$x\mapsto\rho(t,x)$.

Existence and uniqueness of solutions to~\eqref{eq:1} follows
from~\cite{AmorimColomboTexeira}, as well as some \emph{a priori}
estimates, namely $\L1$, $\L\infty$ and total variation estimates, see
Section~\ref{sec:MR} below.

Motivated by the study of control and optimisation problems, we are
interested in studying the dependence of solutions to~\eqref{eq:1} on
the convolution kernel $w$ and on the velocity function $v$. Estimates
of the dependence of solutions to a general balance laws on the flux
function can be found
in~\cite{ColomboMercierRosini,MercierV2}. However, as precised also
below (see Remark~\ref{rem:Magali}), those estimates turn out
to be implicit when applied to the setting of
  problem~\eqref{eq:1}.

Carefully applying the Kru{\v{z}}kov's doubling of variables
techniques, on the lines of~\cite{Betancourt2011, KarlsenRisebro}, we
derive the $\L1$-Lipschitz continuous dependence of solutions to
\eqref{eq:1} on the initial datum, the kernel (see
Theorem~\ref{thm:stabw}) and the velocity (see
Theorem~\ref{thm:stabv}).  These results are collected in
Section~\ref{sec:MR}, while the technical proofs are deferred to
Section~\ref{sec:proofs}.  Finally, in Section~\ref{sec:numint} we
show some numerical simulation illustrating the behaviour of the
solutions of a non-local traffic flow model, when the size and the
position of the kernel support or the velocity function vary.  In
particular, we analyse the impact on two cost functionals, measuring
traffic congestion.

\section{Main Results}
\label{sec:MR}

The study of problem~\eqref{eq:1} is carried out in the same setting
of~\cite{AmorimColomboTexeira}, with slightly strengthened
conditions. We recall here briefly the assumptions on the flux
function $f$, on $v$ and on $w$:
\begin{align}
  \label{eq:hpf}
  f \in \C2 ( \R \times \R \times \R; \R^+)
  \quad
  \mbox{and}
  &
    \quad
    \left\{
    \begin{array}{r@{\,}c@{\,}l}
      \displaystyle \sup_{t,x,\rho} \modulo{\partial_\rho f(t,x,\rho)}
      & <
      & +\infty
      \\
      \displaystyle  \sup_{t,x}\modulo{\partial_x f(t,x,\rho)}
      &<
      &C \modulo{\rho}
      \\
      \displaystyle \sup_{t,x}\modulo{\partial^2_{xx} f(t,x,\rho)}
      &<
      &C  \modulo{\rho}
      \\
      \forall t, \, x \quad f (t,x,0)
      & =
      & 0
    \end{array}
        \right.
  \\
  \label{eq:hpvw}
  v \in (\C2 \cap \W2\infty)(\R;\R)
  \quad
  \mbox{and}
  &
    \quad
    w\in (\C2 \cap \W11 \cap \W2\infty)(\R;\R).
\end{align}

We recall also the definition of solution to problem~\eqref{eq:1},
see~\cite[Definition~2.1]{AmorimColomboTexeira}.
\begin{definition}
  \label{def:sol}
  Let $T > 0$. Fix $\rho_o \in \L\infty (\R;\R)$. A \emph{weak entropy
    solution} to~\eqref{eq:1} on $[0,T]$ is a bounded measurable Kru\v
  zkov solution $\rho \in \C0 ([0,T]; \Lloc1 (\R;\R))$ to
  \begin{displaymath}
    \left\{
      \begin{array}{l}
        \partial_t \rho +
        \partial_x \left( f (t,x,\rho)  \, V (t,x) \right) = 0
        \\
        \rho (0,x) = \rho_o (x)
      \end{array}
    \right.
    \quad
    \mbox{where}
    \quad
    V (t,x) = v ((\rho (t)*w) (x)).
  \end{displaymath}
\end{definition}

The results in~\cite{AmorimColomboTexeira} ensure the existence and
uniqueness of solution to~\eqref{eq:1} and provides the following
\emph{a priori} estimates on the solution.
\begin{lemma}[{\cite[Lemma~2.2]{AmorimColomboTexeira}}]
  \label{lem:pos}
  Let conditions~\eqref{eq:hpf}-\eqref{eq:hpvw} hold. If
  $\rho_o (x) \geq 0$ for all $x \in \R$, then the solution
  to~\eqref{eq:1} is such that $\rho(t,x)\geq 0$ for all
  $(t,x)\in \R^+\times \R.$
\end{lemma}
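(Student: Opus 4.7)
The plan is to reduce the non-local equation to a local scalar conservation law by freezing the velocity. Since the entropy solution $\rho$ is given, the map $V(t,x):= v\!\left((\rho(t)*w)(x)\right)$ may be regarded as a prescribed $(t,x)$-coefficient. Hypothesis~\eqref{eq:hpvw} combined with the $\L\infty$ a~priori bound on $\rho$ from~\cite{AmorimColomboTexeira} guarantees that $V$ is Lipschitz continuous in $x$ (through $\snr{\partial_x V}\le \norma{v'}_{\L\infty}\norma{\rho}_{\L\infty}\norma{w'}_{\L1}$) and continuous in $t$ (through the $\C0([0,T];\Lloc1)$ regularity of $\rho$ together with $w\in\L\infty$). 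Hence~\eqref{eq:1} can be interpreted as a local Kru\v{z}kov problem with flux $F(t,x,\rho):= f(t,x,\rho)\,V(t,x)$, and the key property from~\eqref{eq:hpf} is $f(t,x,0)=0$, which yields $F(t,x,0)\equiv 0$ and in particular $\partial_x F(t,x,0)\equiv 0$.

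With $V$ frozen, the zero function $\tilde\rho\equiv 0$ is itself a Kru\v{z}kov entropy solution of the local equation $\partial_t u+\partial_x F(t,x,u)=0$ with trivial datum, and the classical order-preservation property between two Kru\v{z}kov solutions of the same local conservation law then forces $\rho(t,x)\ge \tilde\rho(t,x)=0$ whenever $\rho_o\ge 0$.

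To make this concrete one tests the Kru\v{z}kov inequality from Definition~\ref{def:sol} with $k=0$; since $\partial_x F(t,x,0)\equiv 0$ there is no source term and the inequality reads
\[
\partial_t\snr{\rho}+\partial_x\!\bigl(\sgn(\rho)\,F(t,x,\rho)\bigr)\le 0
\]
in $\mathcal{D}'$. Subtracting the PDE $\partial_t\rho+\partial_x F(t,x,\rho)=0$ and halving gives an inequality for $\rho^-:=\max\{-\rho,0\}$, namely
\[
\partial_t\rho^- -\partial_x\!\bigl(F(t,x,\rho)\,\caratt{\{\rho<0\}}\bigr)\le 0.
\]
Testing against a smooth spatial cutoff $\phi_R$ whose support propagates at the finite speed $L:= \norma{\partial_\rho f}_{\L\infty}\norma{V}_{\L\infty}$ and applying Gr\"onwall's lemma yields $\int_\R \rho^-(t,x)\,\phi_R(x)\d{x}=0$ for every $t\in[0,T]$, whence $\rho(t,\cdot)\ge 0$ a.e.

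The main technical obstacle I anticipate is the localisation in $x$: the datum $\rho_o$ is merely in $\L\infty$, so $\rho^-(t,\cdot)$ need not be integrable on $\R$ and the above integration must be performed against compactly supported test functions. This is handled by the standard finite-speed-of-propagation cutoff argument, which is available because the frozen flux $F$ is Lipschitz in $x$ uniformly on bounded sets in $\rho$; once this localisation is in place, the conclusion follows at once from the Kru\v{z}kov formulation and the defining property $f(t,x,0)=0$.
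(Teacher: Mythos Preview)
Your argument is correct: freezing $V$ as a known coefficient, the crucial hypothesis $f(t,x,0)=0$ makes $\tilde\rho\equiv 0$ an entropy solution of the resulting local equation, and the Kru\v{z}kov comparison principle (or equivalently your derivation of the inequality for $\rho^-$) then yields $\rho\ge 0$. The localisation issue you flag is real but handled exactly as you describe, by a time-dependent cutoff matched to the finite propagation speed $L=\norma{\partial_\rho f}_{\L\infty}\norma{V}_{\L\infty}$.

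There is nothing to compare against: the paper does not prove this lemma at all. It is quoted verbatim from~\cite[Lemma~2.2]{AmorimColomboTexeira} as a known a~priori estimate, with no argument given. Your write-up is therefore a self-contained proof of a result the paper merely imports; the approach you take (comparison with the zero solution, exploiting $f(t,x,0)=0$) is the standard one and is in fact the argument used in the cited reference.
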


\begin{lemma}[{\cite[Lemma~2.4]{AmorimColomboTexeira}}]
  \label{lem:L1}
  Let conditions~\eqref{eq:hpf}-\eqref{eq:hpvw} hold. If
  $\rho_o (x) \geq 0$ for all $x \in \R$, then the solution
  to~\eqref{eq:1} satisfies, for all $t \in \R^+$,
  \begin{displaymath}
    \norma{\rho(t)}_{\L1(\R;\R)}\leq \norma{\rho_o}_{\L1(\R;\R)}.
  \end{displaymath}
\end{lemma}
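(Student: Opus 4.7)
My plan is to combine the positivity statement of Lemma~\ref{lem:pos} with the Kru\v{z}kov entropy condition evaluated at the specific constant $k=0$. Because $\rho\geq 0$, the $\L1$-norm coincides with the signed integral $\int_\R \rho(t,x)\,\d x$, and because the flux satisfies $f(t,x,0)=0$ in~\eqref{eq:hpf}, all $k$-dependent pieces of the entropy inequality vanish at $k=0$. The entropy inequality therefore reduces to the weak formulation
\begin{equation*}
  \int_0^T\!\!\int_\R \bigl(\rho\,\partial_t\phi + f(t,x,\rho)\,V(t,x)\,\partial_x\phi\bigr)\,\d{x}\,\d{t} + \int_\R \rho_o(x)\,\phi(0,x)\,\d{x} \geq 0,
\end{equation*}
valid for every nonnegative $\phi\in\Cc1([0,T)\times\R)$.

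\textbf{Execution.} I would then insert the separable test function $\phi(t,x) = \psi_\varepsilon(t)\,\chi_R(x)$, where $\psi_\varepsilon$ is a smooth nonincreasing approximation of $\caratt{[0,t_0]}$ with $\psi_\varepsilon(0)=1$, and $\chi_R\in\C1(\R)$ is a cutoff equal to $1$ on $[-R,R]$, supported in $[-R-1,R+1]$, with $\norma{\chi_R'}_\infty\leq 2$. Letting $\varepsilon\to 0$ and using the time continuity $\rho\in\C0([0,T];\Lloc1(\R;\R))$ yields, for every $t_0\in[0,T]$,
\begin{equation*}
  \int_\R \rho(t_0,x)\,\chi_R(x)\,\d{x} \leq \int_\R \rho_o(x)\,\chi_R(x)\,\d{x} + \int_0^{t_0}\!\!\int_\R f(s,x,\rho)\,V(s,x)\,\chi_R'(x)\,\d{x}\,\d{s}.
\end{equation*}
The mean-value bound $\modulo{f(s,x,\rho)}\leq \norma{\partial_\rho f}_\infty\,\rho(s,x)$, which again relies on $f(s,x,0)=0$, together with $\modulo{V}\leq \norma{v}_\infty$, controls the error integral by a constant times the annular mass $\int_0^{t_0}\!\!\int_{R\leq\modulo x\leq R+1}\rho(s,x)\,\d{x}\,\d{s}$.

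\textbf{Passage to the infinite-domain limit.} The decisive step, and the one I expect to be the main obstacle, is to show that this annular mass vanishes as $R\to\infty$, since a priori the definition only gives $\rho\in\Lloc1$. I would first treat compactly supported nonnegative $\rho_o$: for $R$ large enough $\chi_R\equiv 1$ on $\spt\rho_o$, so the initial term equals $\norma{\rho_o}_{\L1}$, and a Gronwall-type closure applied to the truncated mass $R\mapsto\int_\R\rho(t_0,x)\chi_R(x)\,\d{x}$ (equivalently, to the tail $\int_{\modulo x > R}\rho(t_0,\cdot)$) yields both $\rho(t_0,\cdot)\in\L1$ and the desired inequality. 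The general case $\rho_o\in\L1\cap\L\infty$ with $\rho_o\geq 0$ then follows by density, approximating $\rho_o$ by compactly supported nonnegative data and invoking the $\L1$-Lipschitz stability with respect to initial data provided by~\cite{AmorimColomboTexeira}. Monotone convergence on the left-hand side then closes the proof.
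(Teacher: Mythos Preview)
The paper does not give its own proof of this lemma; it is quoted from~\cite[Lemma~2.4]{AmorimColomboTexeira}, so there is nothing here to compare your argument against. Your overall strategy is the standard rigorous justification of mass conservation and is essentially correct: positivity plus $f(t,x,0)=0$ collapses the Kru\v{z}kov inequality at $k=0$ to the weak formulation, and testing against $\psi_\varepsilon(t)\chi_R(x)$ leaves only the annular flux error. Two points, however, deserve sharpening.

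First, the ``Gronwall-type closure'' for the tail mass is too vague as stated. The inequality you obtain,
\[
  m_R(t_0)\leq\norma{\rho_o}_{\L1}+C\int_0^{t_0}\!\int_{R\leq|x|\leq R+1}\rho\,\d x\,\d s,
\]
does not close on $m_R$ alone: the right-hand side is controlled by $m_{R+1}$, not $m_R$. For compactly supported data the cleanest fix is finite speed of propagation---the characteristic speed $\partial_\rho f\cdot V$ is bounded by $\norma{\partial_\rho f}_{\L\infty}\norma{v}_{\L\infty}$, so the support grows at most linearly and the annular term is identically zero for large $R$. Alternatively, iterate the inequality in $R$, using the $\L\infty$ bound of Lemma~\ref{lem:Linf} to control $m_{R+n}\leq 2(R+n+1)M_T$ so that $(Ct)^n m_{R+n}/n!\to 0$; this first yields the crude bound $\norma{\rho(t)}_{\L1}\leq e^{Ct}\norma{\rho_o}_{\L1}$, which then kills the annular error and gives the sharp bound directly, with no density argument needed.

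Second, your density step invokes $\L1$-stability in the initial datum from~\cite{AmorimColomboTexeira}. Watch for circularity: in the present paper the corresponding stability estimate (proof of Theorem~\ref{thm:stabw}) explicitly uses Lemma~\ref{lem:L1} to replace $\norma{\rho(s)}_{\L1}$ by $\norma{\rho_o}_{\L1}$ in the constants. If the same holds in~\cite{AmorimColomboTexeira}, you must either verify that their stability proof is independent of their Lemma~2.4, or bypass the issue altogether via the iteration argument above.
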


\begin{lemma}[{\cite[Lemma~2.5]{AmorimColomboTexeira}}]
  \label{lem:Linf}
  Let conditions~\eqref{eq:hpf}-\eqref{eq:hpvw} hold. If
  $\rho_o (x) \geq 0$ for all $x \in \R$, then the solution
  to~\eqref{eq:1} satisfies, for all $t \in \R^+$,
  \begin{equation}
    \label{eq:Linf}
    \norma{\rho(t)}_{\L\infty(\R;\R)}\leq
    \norma{\rho_o}_{\L\infty(\R;\R)}\, e^{\mathcal{L}t},
  \end{equation}
  where
  $\mathcal{L}=C\,\norma{v}_{\L\infty(\R;\R)} + \norma{\partial_\rho
    f}_{\L\infty([0,t] \times \R
    \times\R;\R)}\,\norma{v'}_{\L\infty(\R;\R)}\,
  \norma{\rho_o}_{\L1(\R;\R)}\,\norma{w'}_{\L\infty(\R;\R)}.$
\end{lemma}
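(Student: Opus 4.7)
The plan is to reduce the $\L\infty$ estimate to a Gronwall argument along characteristics, after extracting pointwise bounds on the non-local velocity $V$ and its spatial derivative from the $\L1$ estimate of Lemma~\ref{lem:L1}. First, expanding the flux derivative I would rewrite~\eqref{eq:1} in the non-conservative form
\begin{equation*}
  \partial_t \rho + V(t,x)\,\partial_\rho f(t,x,\rho)\,\partial_x \rho
  = -\, V(t,x)\,\partial_x f(t,x,\rho) - f(t,x,\rho)\,\partial_x V(t,x).
\end{equation*}
Here $f(t,x,0)=0$ combined with $\sup_{t,x,\rho}\modulo{\partial_\rho f}<+\infty$ gives $\modulo{f(t,x,\rho)}\le \norma{\partial_\rho f}_{\L\infty}\,\modulo{\rho}$, assumption~\eqref{eq:hpf} yields $\modulo{\partial_x f(t,x,\rho)}\le C\,\modulo{\rho}$, and
\begin{equation*}
  \modulo{\partial_x V(t,x)}
  = \modulo{v'\!\bigl((\rho(t)*w)(x)\bigr)\,(\rho(t)*w')(x)}
  \le \norma{v'}_{\L\infty}\,\norma{w'}_{\L\infty}\,\norma{\rho(t)}_{\L1},
\end{equation*}
which by Lemma~\ref{lem:L1} is bounded by $\norma{v'}_{\L\infty}\,\norma{w'}_{\L\infty}\,\norma{\rho_o}_{\L1}$ for every $t$.

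Next, along the characteristic curves defined by $\dot x(t) = V(t,x(t))\,\partial_\rho f(t,x(t),\rho(t,x(t)))$, the equation reads
\begin{equation*}
  \frac{\d{}}{\d{t}}\rho(t,x(t))
  = -\,V(t,x(t))\,\partial_x f(t,x(t),\rho) - f(t,x(t),\rho)\,\partial_x V(t,x(t)),
\end{equation*}
so combining the bounds above I obtain
\begin{equation*}
  \modulo{\tfrac{\d{}}{\d{t}}\rho(t,x(t))}
  \le \Bigl(C\,\norma{v}_{\L\infty} + \norma{\partial_\rho f}_{\L\infty}\,\norma{v'}_{\L\infty}\,\norma{\rho_o}_{\L1}\,\norma{w'}_{\L\infty}\Bigr)\,\modulo{\rho(t,x(t))} = \mathcal L\,\modulo{\rho(t,x(t))}.
\end{equation*}
Gronwall's lemma then yields $\modulo{\rho(t,x(t))} \le \modulo{\rho_o(x(0))}\,e^{\mathcal L t} \le \norma{\rho_o}_{\L\infty}\,e^{\mathcal L t}$, and taking the supremum over $x$ gives~\eqref{eq:Linf}.

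The main obstacle is that a Kru\v zkov entropy solution is only in $\L\infty$, so characteristics are not literally available. To make the computation rigorous I would work at the level of the finite volume approximations used in~\cite{AmorimColomboTexeira} to construct the solution: for those piecewise constant iterates, the scheme can be written as a convex combination of the previous value and of terms bounded by $\mathcal L \dt\,\modulo{\rho^{n}_{j}}$, producing the discrete inequality $\norma{\rho^{n+1}}_{\L\infty} \le (1+\mathcal L \dt)\,\norma{\rho^{n}}_{\L\infty}$, whose iteration gives exactly~\eqref{eq:Linf} in the limit. Alternatively, one may smooth the initial datum and the coefficients, apply the classical characteristic argument to the resulting $\C1$ solution, and pass to the limit using the $\L1$-stability of entropy solutions.
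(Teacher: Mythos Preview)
The paper does not give its own proof of this lemma: it is quoted verbatim from~\cite[Lemma~2.5]{AmorimColomboTexeira}, so there is no in-paper argument to compare against. Your plan is the standard one and matches what is done in the cited reference: the bound is obtained at the level of the approximate (Lax--Friedrichs) solutions, where one shows a recursive inequality of the form $\norma{\rho^{n+1}}_{\L\infty}\le (1+\mathcal L\,\dt)\norma{\rho^n}_{\L\infty}$ and then passes to the limit. Your formal characteristic computation identifies exactly the right source terms and produces the correct constant $\mathcal L$, and you correctly flag that the rigorous version must be carried out on the discrete scheme (or on smooth approximations) rather than on the entropy solution directly.
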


\begin{proposition}[{\cite[Proposition~2.6]{AmorimColomboTexeira}}]
  \label{prop:tv}
  Let conditions~\eqref{eq:hpf}-\eqref{eq:hpvw} hold. If
  $\rho_o (x) \geq 0$ for all $x \in \R$, then the solution
  to~\eqref{eq:1} satisfies the following total variation estimate:
  for all $t \in \R^+$
  \begin{equation}
    \label{eq:tv}
    \tv(\rho(t))\leq\left(\mathcal K_2 \,t+\tv(\rho_o)\right) e^{\mathcal K_1\,t},
  \end{equation}
  where
  \begin{align}
    \nonumber
    \mathcal K_1= \
    &  \norma{\partial^2_{\rho x} f }_{\L\infty(\Sigma_t;\R)} \, \norma{v}_{\L\infty(\R;\R)},
    \\
    \label{eq:2}
    \mathcal K_2= \
    & \left[\frac{3}{2} C + \left(
      \norma{\partial_\rho f }_{\L\infty(\Sigma_t;\R)}+C \right) \norma{w'}_{\W1\infty(\R;\R)}\norma{\rho_o}_{\L1(\R;\R)}
      \right.
    \\
    \nonumber
    &
      \left. + \frac{1}{2}\left(
      C+\norma{\partial_\rho f}_{\L\infty(\Sigma_t;\R)}
      \left( 2 + \norma{\rho_o}_{\L1(\R;\R)} \norma{w'}_{\L\infty(\R;\R)}\right)
      \right)
      \norma{w'}_{\W1\infty(\R;\R)}
      \right]
    \\
    \nonumber
    & \times
      \norma{v}_{\W2\infty (\R;\R)} \norma{\rho_o}_{\L1 (\R;\R)},
  \end{align}
  with $\Sigma_t = [0,t] \times \R \times [0,M_t]$ and
  $M_t = \norma{\rho_o}_{\L\infty(\R;\R)}\, e^{\mathcal{L}t}$, as
  in~\eqref{eq:Linf}.
\end{proposition}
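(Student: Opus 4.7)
The plan is to establish \eqref{eq:tv} through an approximation-and-Gr\"onwall argument in the spirit of~\cite{AmorimColomboTexeira}: build Lax--Friedrichs finite-volume approximations $\rho^\Delta$ to the unique solution of~\eqref{eq:1}, derive a one-step discrete total variation bound of the form
\[
  \tv\!\bigl(\rho^\Delta(t^{n+1})\bigr)\le(1+\mathcal K_1\dt)\,\tv\!\bigl(\rho^\Delta(t^n)\bigr)+\mathcal K_2\,\dt,
\]
iterate it to obtain~\eqref{eq:tv} for $\rho^\Delta$, and transfer the bound to $\rho$ via $\L1_{\rm loc}$ convergence together with lower semicontinuity of $\tv$. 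Throughout, Lemmas~\ref{lem:L1} and~\ref{lem:Linf} (at both the discrete and continuous level) are used to bound the solution norms appearing in the constants.

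The central computation is the one-step inequality. Starting from a standard Lax--Friedrichs scheme
\[
  \rho^{n+1}_j=\rho^n_j-\tfrac{\dt}{\dx}\bigl(F^n_{j+1/2}-F^n_{j-1/2}\bigr),\quad F^n_{j+1/2}=\tfrac12\bigl(f(t^n,\xjpudemi,\rho^n_j)+f(t^n,\xjpudemi,\rho^n_{j+1})\bigr)V^n_{j+1/2}-\tfrac\alpha2(\rho^n_{j+1}-\rho^n_j),
\]
with $V^n_{j+1/2}=v\bigl((\rho^n*w)(\xjpudemi)\bigr)$, I would form the differences $\rho^{n+1}_{j+1}-\rho^{n+1}_j$ and sum $\sum_j\snr{\rho^{n+1}_{j+1}-\rho^{n+1}_j}$, decomposing the outcome into three blocks: (i) the $\rho$-monotone transport part coming from $\partial_\rho f\cdot V$, which under a CFL condition yields a convex combination and only a $(1+\mathcal K_1\dt)$ multiplier, with $\mathcal K_1=\norma{\partial^2_{\rho x}f}_\infty\norma{v}_\infty$; (ii) the explicit $x$-dependence of $f$, estimated via $\snr{\partial_x f}\le C\snr{\rho}$ together with~\eqref{eq:Linf} and Lemma~\ref{lem:L1}, producing the terms in $\tfrac32 C$ and $\left(\norma{\partial_\rho f}_\infty+C\right)$ of~\eqref{eq:2}; (iii) the spatial variations of $V^n$, which via a discrete product/chain rule generate $v'\,\partial_x(\rho^n*w)$ together with the second-order contributions $v''\,(\partial_x(\rho^n*w))^2+v'\,\partial^2_{xx}(\rho^n*w)$. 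For (iii), Young's convolution inequality gives $\norma{\partial_x^k(\rho^n*w)}_\infty\le\norma{\rho_o}_{\L1(\R;\R)}\norma{w^{(k)}}_\infty$ for $k=1,2$, which is precisely what produces $\norma{w'}_{\W1\infty(\R;\R)}$ and $\norma{v}_{\W2\infty(\R;\R)}$ in $\mathcal K_2$.

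The main obstacle is the careful bookkeeping in block (iii): each discrete increment of $V^n$ must be expanded so that only the piece genuinely linear in $\rho^n_{k+1}-\rho^n_k$ enters the $(1+\mathcal K_1\dt)$ factor, while the remainders---uniformly bounded in $\Delta$ using Lemmas~\ref{lem:L1}--\ref{lem:Linf}---are absorbed into the additive $\mathcal K_2\dt$. A subsidiary point is that the $\L\infty$ estimate of Lemma~\ref{lem:Linf} must first be proved on $\rho^\Delta$ itself, so that the constant $M_t$ in $\Sigma_t$ is available uniformly in $\Delta$ before Gr\"onwall is invoked. Once all this is in place, iterating the one-step inequality and comparing with the ODE $u'=\mathcal K_1 u+\mathcal K_2$, $u(0)=\tv(\rho_o)$, gives~\eqref{eq:tv} at the discrete level, and the limit $\dx,\dt\to 0$ transfers the bound to the Kru\v zkov solution of Definition~\ref{def:sol}.
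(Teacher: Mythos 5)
The paper does not prove Proposition~\ref{prop:tv}: it is recalled verbatim from \cite[Proposition~2.6]{AmorimColomboTexeira}, so there is no in-paper argument to compare against. Your outline --- Lax--Friedrichs approximations, a one-step discrete total variation inequality iterated via Gr\"onwall, and transfer to the entropy solution by $\Lloc1$ convergence and lower semicontinuity of $\tv$ --- is precisely the strategy used in that reference (and in the related works \cite{ACG2015, BlandinGoatin2016}), so at the level of a plan it is the right one, including the correctly flagged prerequisite that the $\L1$ and $\L\infty$ bounds must first be established uniformly on the discrete approximations. The only caveat is that the sketch does not actually carry out the bookkeeping that produces the specific constants $\mathcal K_1$ and $\mathcal K_2$ in~\eqref{eq:2}; that computation is where essentially all the work of the cited proof lies, so as written the proposal should be regarded as a correct proof scheme rather than a complete proof.
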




\begin{remark} {\rm { The regularity assumptions required
      in~\cite{AmorimColomboTexeira} for the functions $v$ and $w$,
      see~\cite[Formula~(2.2)]{AmorimColomboTexeira}, are actually
      less restrictive than~\eqref{eq:hpvw}. Indeed, to guarantee the
      existence of solutions and to obtain the \emph{a priori}
      estimates above, it is sufficient that
  \begin{displaymath}
    v \in (\C2 \cap \W1\infty) (\reali;\reali)
    \qquad \mbox{and} \qquad
    w \in (\C2 \cap \W2\infty) (\reali;\reali).
  \end{displaymath}}}
\end{remark}

\smallskip Aim of this paper is to study the stability of solutions
to~\eqref{eq:1} with respect to both the kernel $w$ and the velocity
function $v$. The following Theorem states the $\L1$--Lipschitz
continuous dependence of solutions to~\eqref{eq:1} on both the initial
datum and the kernel function.
\begin{theorem}
  \label{thm:stabw}
  Let $T > 0$. Fix $f$ and $v$ satisfying~\eqref{eq:hpf}
  and~\eqref{eq:hpvw} respectively.  Fix
  $\rho_o, \tilde \rho_o \in \L\infty (\R;\R)$. Let
  $w, \tilde w \in (\C2 \cap \W11 \cap \W2\infty) (\R; \R)$. Call
  $\rho$ and $\tilde \rho$ the solutions, in the sense of
  Definition~\ref{def:sol}, to the following problems respectively
  \begin{align}
    \label{eq:rho}
    \left\{
    \begin{array}{l}
      \partial_t \rho +
      \partial_x ( f (t,x,\rho)  \, V (t,x) ) = 0
      \\
      \rho (0,x) = \rho_o (x)
    \end{array}
    \right.
    \quad
    \mbox{where}
 &
   \quad
   V (t,x) = v ((\rho (t)*w) (x)),
    \\
    \label{eq:rhot}
    \left\{
    \begin{array}{l}
      \partial_t \tilde \rho +
      \partial_x ( f (t,x,\tilde \rho)  \, \tilde V (t,x) ) = 0
      \\
      \tilde \rho (0,x) = \tilde \rho_o (x)
    \end{array}
    \right.
    \quad
    \mbox{where}
 &
   \quad
   \tilde  V (t,x) = v ((\tilde \rho (t)*\tilde w) (x)).
  \end{align}
  Then, for any $t \in [0,T]$, the following estimate holds
  \begin{equation}
    \label{eq:3}
    \norma{\rho(t)-\tilde \rho(t)}_{\L1(\R; \reali)}
    \leq
    \left( \norma{\rho_o-\tilde{\rho}_o}_{\L1(\R;\reali)} + a(t) \, \norma{w-\tilde w}_{\W11(\R;\R)}  \right) \exp\left(\int_0^t b (r) \d{r} \right),
  \end{equation}
  where $a(t)$ and $b (t)$ depend on various norms of the initial data
  and of the functions $f$, $v$, $w$ and $\tilde w$, see~\eqref{eq:a}
  and~\eqref{eq:b}.
\end{theorem}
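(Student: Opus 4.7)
My plan is to prove Theorem~\ref{thm:stabw} by applying Kru\v{z}kov's doubling of variables directly to the pair $(\rho,\tilde\rho)$, in the spirit of~\cite{KarlsenRisebro} for conservation laws with spatially heterogeneous flux. Writing the two equations in the form $\partial_t \rho + \partial_x(f(t,x,\rho)V) = 0$ and $\partial_t\tilde\rho + \partial_x(f(t,x,\tilde\rho)\tilde V) = 0$, the only mismatch between the two fluxes is the scalar factor $V$ versus $\tilde V$, so one expects an $\L1$--stability estimate in which $V - \tilde V$ drives a source term. Throughout, the a priori bounds of Lemmas~\ref{lem:pos}--\ref{lem:Linf} and Proposition~\ref{prop:tv}, applied to both solutions on $[0,T]$, supply uniform $\L\infty$, $\L1$ and $BV$ control that will eventually populate the coefficients $a(t)$ and $b(t)$.

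Doubling the variables, subtracting the two entropy inequalities with Kru\v{z}kov constants $k = \tilde\rho(s,y)$ and $k' = \rho(t,x)$ respectively, and letting the test function concentrate on the diagonal $\{s=t,\ y=x\}$ yields, in a standard way, the Kru\v{z}kov-type inequality
\begin{equation*}
  \norma{\rho(t) - \tilde\rho(t)}_{\L1(\R;\R)}
  \leq \norma{\rho_o - \tilde\rho_o}_{\L1(\R;\R)}
  + \int_0^t \int_\R \left| \partial_x \bigl( f(s,x,\tilde\rho)\,(V - \tilde V) \bigr) \right| dx\, ds,
\end{equation*}
the extra integrand reflecting exactly the flux mismatch $A - \tilde A = f\cdot(V - \tilde V)$. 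Expanding the derivative and using~\eqref{eq:hpf} (in particular $\snr{\partial_x f} \leq C\snr{\rho}$ and $f(s,x,0) = 0$), the integrand is majorised by
\begin{equation*}
  C \norma{\tilde\rho}_{\L\infty} \modulo{V - \tilde V}
  + \norma{\partial_\rho f}_{\L\infty}\, \modulo{\partial_x \tilde\rho}\, \norma{V - \tilde V}_{\L\infty(\R;\R)}
  + \norma{\partial_\rho f}_{\L\infty} \norma{\tilde\rho}_{\L\infty}\, \modulo{\partial_x (V - \tilde V)},
\end{equation*}
where the middle term is understood as the pairing of the finite measure $\partial_x \tilde\rho$ (of total mass $\tv(\tilde\rho(s))$) with a continuous function.

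The three quantities $\norma{V - \tilde V}_{\L1(\R;\R)}$, $\norma{V - \tilde V}_{\L\infty(\R;\R)}$ and $\norma{\partial_x(V - \tilde V)}_{\L1(\R;\R)}$ are then all handled by a common telescoping trick
\begin{equation*}
  V - \tilde V
  = \bigl[ v((\rho * w)) - v((\tilde\rho * w)) \bigr]
  + \bigl[ v((\tilde\rho * w)) - v((\tilde\rho * \tilde w)) \bigr],
\end{equation*}
and an analogous one for $\partial_x V - \partial_x \tilde V = v'(\rho*w)(\rho * w') - v'(\tilde\rho * \tilde w)(\tilde\rho * \tilde w')$; the mean-value theorem on $v$ and $v'$ (controlled by $\norma{v}_{\W2\infty(\R;\R)}$) together with Young's convolution inequality bound each summand by a constant depending on $\norma{w}_{\W11}$, $\norma{\tilde w}_{\W1\infty}$ and $\norma{\tilde\rho(s)}_{\L1}$, multiplied by either $\norma{\rho(s) - \tilde\rho(s)}_{\L1(\R;\R)}$ or $\norma{w - \tilde w}_{\W11(\R;\R)}$.

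Collecting everything yields a linear integral inequality of the form
\begin{equation*}
  \norma{\rho(t) - \tilde\rho(t)}_{\L1(\R;\R)}
  \leq \norma{\rho_o - \tilde\rho_o}_{\L1(\R;\R)}
  + \int_0^t \Bigl( b(s)\, \norma{\rho(s) - \tilde\rho(s)}_{\L1(\R;\R)} + \beta(s)\, \norma{w - \tilde w}_{\W11(\R;\R)} \Bigr) ds,
\end{equation*}
and the classical Gronwall lemma then produces~\eqref{eq:3} with $a(t) = \int_0^t \beta(s)\,ds$. The main technical obstacle is the middle summand in Step~2: since $\partial_x \tilde\rho$ is only a measure, I am forced to pair it with $\norma{V - \tilde V}_{\L\infty}$, which makes the $BV$-estimate of Proposition~\ref{prop:tv} (with its intricate constants $\mathcal K_1, \mathcal K_2$ of~\eqref{eq:2}) unavoidable inside the coefficients $a$ and $b$. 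The remaining work is a careful but essentially routine bookkeeping of Young's convolution and mean-value estimates.
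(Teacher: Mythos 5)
Your proposal follows essentially the same route as the paper: the Kru\v{z}kov doubling step you describe is exactly the content of the paper's Lemma~\ref{lem:dv} (which yields the source term $\int\bigl\{\snr{\partial_x\tilde V-\partial_x V}\,\snr{f}+\snr{\tilde V-V}\,\snr{\partial_x f}+\snr{\tilde V-V}\,\snr{\partial_\rho f}\,\snr{\partial_x\rho}\bigr\}$, i.e.\ your $\int\snr{\partial_x(f\cdot(V-\tilde V))}$ up to the harmless choice of $\rho$ versus $\tilde\rho$ as the argument of $f$), and the subsequent telescoping of $V-\tilde V$ and $\partial_x V-\partial_x\tilde V$ into a $\norma{\rho-\tilde\rho}_{\L1}$ part and a $\norma{w-\tilde w}_{\W11}$ part, followed by Gronwall with the BV bound of Proposition~\ref{prop:tv}, is the paper's argument verbatim. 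The only caveat is that the doubling step you dispatch as ``standard'' is where the paper invests most of its effort, since the flux $f(t,x,\rho)V(t,x)$ depends on $x$ both through $f$ and through $V$; but the inequality you assert is the correct one and is justified by the Karlsen--Risebro-type argument you cite.
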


The $\L1$--Lipschitz continuous dependence of solutions to~\eqref{eq:1}
on the velocity function $v$ is ensured by the following Theorem.
\begin{theorem}
  \label{thm:stabv}
  Let $T > 0$. Fix $f$ and $w$ satisfying~\eqref{eq:hpf}
  and~\eqref{eq:hpvw} respectively. Fix $\rho_o \in \L\infty
  (\R;\R)$. Let $v, \tilde v \in (\C2 \cap \W2\infty) (\R; \R)$. Call
  $\rho$ and $\tilde \rho$ the solutions, in the sense of
  Definition~\ref{def:sol}, to the following problems respectively
  \begin{align}
    \label{eq:rho1}
    \left\{
    \begin{array}{l}
      \partial_t \rho +
      \partial_x ( f (t,x,\rho)  \, V (t,x) ) = 0
      \\
      \rho (0,x) = \rho_o (x)
    \end{array}
    \right.
    \quad
    \mbox{where}
 &
   \quad
   V (t,x) = v ((\rho (t)*w) (x)),
    \\
    \label{eq:rhot1}
    \left\{
    \begin{array}{l}
      \partial_t \tilde \rho +
      \partial_x ( f (t,x,\tilde \rho)  \, \tilde V (t,x) ) = 0
      \\
      \tilde \rho (0,x) = \rho_o (x)
    \end{array}
    \right.
    \quad
    \mbox{where}
 &
   \quad
   \tilde  V (t,x) = \tilde v ((\tilde \rho (t)*w) (x)).
  \end{align}
  Then, for any $t \in [0,T]$, the following estimate holds
  \begin{equation}
    \label{eq:16}
    \norma{\rho(t)-\tilde \rho(t)}_{\L1(\R; \reali)}
    \leq  \left(
      c_1 (t) \, \norma{v - \tilde v}_{\L\infty (\reali;\reali)}
      +
      c_2 (t) \, \norma{v' - \tilde v'}_{\L\infty (\reali;\reali)}
    \right) \, \exp\left(\int_0^t c_3 (s) \d{s} \right),
  \end{equation}
  where the $c_i(t)$, $i = 1, 2, 3$, depend on various norms of the
  initial data and of the functions $f$, $v$, $\tilde v$ and $w$,
  see~\eqref{eq:c1}, \eqref{eq:c2} and~\eqref{eq:c3}.
\end{theorem}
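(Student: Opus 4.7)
The plan is to adapt the doubling of variables argument used in the proof of Theorem~\ref{thm:stabw} to the present setting. Since the two Cauchy problems~\eqref{eq:rho1} and~\eqref{eq:rhot1} share the same initial datum $\rho_o$, the same flux $f$, and the same kernel $w$, the only source of discrepancy between $\rho$ and $\tilde \rho$ lies in the velocity fields $V$ and $\tilde V$.

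First I would write the Kru\v zkov entropy inequality for $\rho$ with constant $k \in \R$ and velocity $V$, and for $\tilde \rho$ with the same $k$ and velocity $\tilde V$. Doubling variables and pairing $\tilde \rho(s,y)$ against $\rho(t,x)$ (and conversely) with a test function concentrating on the diagonal, then letting the regularisation parameter vanish and specialising the test function so as to approximate the characteristic function of $[0,t]\times\R$, should yield an inequality of the form
\begin{equation*}
    \norma{\rho(t) - \tilde \rho(t)}_{\L1(\R;\R)}
    \le
    \int_0^t \int_{\R} \modulo{\partial_x \bigl[f\bigl(s,x,\tilde \rho(s,x)\bigr) \bigl(V(s,x) - \tilde V(s,x)\bigr)\bigr]} \d{x} \, \d{s},
\end{equation*}
the initial contribution vanishing since $\rho_o = \tilde \rho_o$.

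I would then expand the integrand, bounding it by $\modulo{\partial_x f(s,x,\tilde \rho)}\modulo{V-\tilde V} + \modulo{\partial_\rho f(s,x,\tilde \rho)}\modulo{\partial_x\tilde\rho}\modulo{V - \tilde V} + f(s,x,\tilde \rho)\modulo{\partial_x V - \partial_x \tilde V}$. Writing $A := \rho * w$ and $\tilde A := \tilde \rho * w$, the key decomposition
\begin{equation*}
V - \tilde V = \bigl(v(A) - \tilde v(A)\bigr) + \bigl(\tilde v(A) - \tilde v(\tilde A)\bigr)
\end{equation*}
gives $\norma{V(s)-\tilde V(s)}_{\L\infty(\R;\R)} \le \norma{v-\tilde v}_{\L\infty(\R;\R)} + \norma{\tilde v'}_{\L\infty(\R;\R)}\norma{w}_{\L\infty(\R;\R)}\norma{\rho(s)-\tilde\rho(s)}_{\L1(\R;\R)}$. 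An analogous decomposition of $\partial_x V - \partial_x \tilde V = v'(A)(\rho*w') - \tilde v'(\tilde A)(\tilde\rho*w')$, obtained by inserting the intermediate term $\tilde v'(A)(\rho*w')$, produces three contributions bounded respectively by $\norma{v'-\tilde v'}_{\L\infty(\R;\R)}$, by $\norma{\rho(s)-\tilde\rho(s)}_{\L1(\R;\R)}$ (through $\tilde v'(A)[(\rho-\tilde\rho)*w']$), and again by $\norma{\rho(s)-\tilde\rho(s)}_{\L1(\R;\R)}$ (through $[\tilde v'(A)-\tilde v'(\tilde A)](\tilde\rho*w')$, absorbed via $\norma{\tilde v''}_{\L\infty(\R;\R)}$).

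Using the a priori estimates of Lemmas~\ref{lem:pos}--\ref{lem:Linf} and Proposition~\ref{prop:tv} to control $\norma{\tilde\rho(s)}_{\L1(\R;\R)}$, $\norma{\tilde\rho(s)}_{\L\infty(\R;\R)}$ and $\tv(\tilde\rho(s))$, together with the bounds in~\eqref{eq:hpf}, one arrives at an integral inequality of the shape
\begin{equation*}
    \norma{\rho(t)-\tilde \rho(t)}_{\L1(\R;\R)}
    \le
    \int_0^t \bigl[c_1(s)\norma{v-\tilde v}_{\L\infty(\R;\R)} + c_2(s)\norma{v'-\tilde v'}_{\L\infty(\R;\R)}\bigr] \d{s}
    + \int_0^t c_3(s) \norma{\rho(s)-\tilde \rho(s)}_{\L1(\R;\R)} \d{s},
\end{equation*}
to which Gronwall's lemma delivers~\eqref{eq:16}. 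The main obstacle will be the meticulous bookkeeping of the coefficients $c_1,c_2,c_3$ and verifying that the decompositions above yield precisely the two combinations $\norma{v-\tilde v}_{\L\infty}$ and $\norma{v'-\tilde v'}_{\L\infty}$ featured in~\eqref{eq:16}: in particular, the difference $\tilde v'(A)-\tilde v'(\tilde A)$ must be absorbed into the $\norma{\rho-\tilde\rho}_{\L1}$ contribution via $\norma{\tilde v''}_{\L\infty}$, and the term $\modulo{\partial_x\tilde\rho}\modulo{V-\tilde V}$ must be integrated in $x$ via $\tv(\tilde\rho(s))$, so that no spurious dependence on $\norma{v''-\tilde v''}_{\L\infty}$ enters the final estimate.
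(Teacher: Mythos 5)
Your proposal is correct and follows essentially the same route as the paper: the doubling-of-variables inequality you sketch is precisely the content of the paper's Lemma~\ref{lem:dv} (with the roles of $\rho$ and $\tilde\rho$ interchanged, which is immaterial), and the subsequent decompositions of $V-\tilde V$ and $\partial_x V-\partial_x\tilde V$, the use of Lemmas~\ref{lem:L1}--\ref{lem:Linf} and Proposition~\ref{prop:tv}, and the concluding Gronwall argument all coincide with the paper's proof. The only divergence is that your three-term expansion of $\partial_x V-\partial_x\tilde V$ correctly retains the contribution $\bigl[\tilde v'(A)-\tilde v'(\tilde A)\bigr](\tilde\rho*w')$, absorbed via $\norma{\tilde v''}_{\L\infty(\reali;\reali)}$, which the paper's displayed two-term bound silently drops; this is if anything more careful than the paper, and it only adds a harmless extra summand to the Gronwall coefficient $c_3$ without changing the form of~\eqref{eq:16}.
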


\section{Proofs}
\label{sec:proofs}

The Lemma below is the building block of both Theorem~\ref{thm:stabw}
and Theorem~\ref{thm:stabv}.
\begin{lemma}
  \label{lem:dv}
  Let $T > 0$. Fix $f$ satisfying~\eqref{eq:hpf} and
  $V, \, \tilde V \in (\C2 \cap \W2\infty) (\reali \times \reali;
  \reali)$. Fix $\rho_o, \tilde \rho_o \in \L\infty (\R;\R)$. Call
  $\rho$ and $\tilde \rho$ the solutions to the following problems
  \begin{align}
    \label{eq:17}
    \left\{
    \begin{array}{l}
      \partial_t \rho +
      \partial_x ( f (t,x,\rho)  \, V (t,x) ) = 0
      \\
      \rho (0,x) = \rho_o (x)
    \end{array}
    \right.
    \quad
    \mbox{and}
    \quad
    \left\{
    \begin{array}{l}
      \partial_t \tilde \rho +
      \partial_x ( f (t,x,\tilde \rho)  \, \tilde V (t,x) ) = 0
      \\
      \tilde \rho (0,x) = \tilde \rho_o (x).
    \end{array}
    \right.
  \end{align}
  Then, for any $\tau, t \in \, ]0,T[$, with $\tau < t$, the following
  estimate holds
  \begin{align}
    \label{eq:19}
    \int_{\R} \modulo{\rho (\tau,x) - \tilde \rho (\tau,x)} \d{x}
    -\int_{\R} \modulo{\rho (t,x) - \tilde \rho (t,x)} \d{x}
    &
    \\ \nonumber
    + \int_{\tau}^{t} \! \int_{\R}
    \big\{\modulo{\partial_x \tilde V (s,x)  - \partial_x V (s,x)} \, \modulo{f \left(s,x,\rho (s,x) \right)}
    &
    \\ \nonumber
    + \modulo{\tilde V (s,x) -  V (s,x)} \,\modulo{\partial_x f \left(s,x, \rho (s,x) \right)}
    &
    \\[2pt] \label{eq:20}
    + \modulo{\tilde V (s,x) - V (s,x)} \,
    \modulo{\partial_\rho f  \left(s,x,\rho (s,x) \right) } \,
    \modulo{\partial_x \rho (s,x) }\big\}
    & \d{x}\d{s} \geq \   0.
  \end{align}

\end{lemma}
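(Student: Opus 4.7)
The plan is to apply Kru\v{z}kov's doubling of variables directly to the pair $(\rho, \tilde\rho)$, following the scheme of~\cite{KarlsenRisebro} (see also~\cite{Betancourt2011}), adapted to the two inhomogeneous fluxes $F(t,x,u) = f(t,x,u)\,V(t,x)$ and $\tilde F(t,x,u) = f(t,x,u)\,\tilde V(t,x)$. Assumption~\eqref{eq:hpf}, combined with $V,\tilde V \in (\C2 \cap \W2\infty)(\reali\times\reali;\reali)$, provides precisely the $(x,u)$-regularity that the classical argument needs for both fluxes.

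I would first write the Kru\v{z}kov entropy inequality for $\rho$ at the constant value $k = \tilde\rho(s,y)$ and the symmetric one for $\tilde\rho$ at $k = \rho(t,x)$, pair each with the standard tensorial non-negative test function
\begin{equation*}
  \phi_\varepsilon(t,x,s,y) = \psi\!\left(\tfrac{t+s}{2}, \tfrac{x+y}{2}\right) \delta_\varepsilon\!\left(\tfrac{t-s}{2}\right) \delta_\varepsilon\!\left(\tfrac{x-y}{2}\right),
\end{equation*}
with $\psi \in \Cc\infty$ non-negative and $\delta_\varepsilon$ a standard mollifier, sum the two inequalities and pass to the limit $\varepsilon \to 0$. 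Using the $\C0(\Lloc1)$ regularity of both solutions and the Lipschitz continuity of $u \mapsto f(t,x,u)$ from~\eqref{eq:hpf}, the four-fold integral collapses onto the diagonal $\{t=s,\ x=y\}$; the crucial observation is that on this diagonal $F(t,x,u) - \tilde F(t,x,u) = f(t,x,u)(V-\tilde V)(t,x)$, so the "source" error generated by the two distinct fluxes is of the form $\partial_x \bigl[f(s,x,\rho(s,x))(V-\tilde V)(s,x)\bigr]$.

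Next, specialising $\psi$ to a smooth approximation of $\chi_{[\tau,t]}(s)$ constant in $x$ (the spatial cut-off can be dropped thanks to the $\L1$ bound of Lemma~\ref{lem:L1} on both solutions and the $\L\infty$ bound on $V, \tilde V$ from~\eqref{eq:hpvw}), the endpoint contributions produce $\int \modulo{\rho(\tau)-\tilde\rho(\tau)}\d{x} - \int \modulo{\rho(t)-\tilde\rho(t)}\d{x}$ and the $\psi_s$-term vanishes, leaving
\begin{equation*}
  \int_\R \modulo{\rho(\tau)-\tilde\rho(\tau)}\d{x} - \int_\R \modulo{\rho(t)-\tilde\rho(t)}\d{x} + \int_\tau^t\!\int_\R \modulo{\partial_x \bigl[f(s,x,\rho)(V-\tilde V)\bigr]}\d{x}\d{s} \geq 0,
\end{equation*}
where the $x$-derivative is understood in the full distributional sense (chain rule on $\rho$ included, which is legitimate because $\rho(s,\cdot) \in \BV(\R)$ by Proposition~\ref{prop:tv}). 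Expanding via Leibniz and the chain rule,
\begin{equation*}
  \partial_x \bigl[f(s,x,\rho)(V-\tilde V)\bigr] = f(s,x,\rho)(\partial_x V - \partial_x \tilde V) + (V-\tilde V)\,\partial_x f(s,x,\rho) + (V-\tilde V)\,\partial_\rho f(s,x,\rho)\,\partial_x \rho,
\end{equation*}
and passing to absolute values term by term, recovers exactly the three integrands of~\eqref{eq:19}--\eqref{eq:20}.

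The step that I expect to be the most technical is the passage to the limit in the doubling so that the $x$-derivative ultimately acts on the full product $f(s,x,\rho(s,x))(V-\tilde V)(s,x)$ rather than only on its $(x,V)$-slot with $\rho$ frozen as a parameter: the naive limit yields an asymmetric error with $\partial_x F$ evaluated at $\tilde\rho$ and $\partial_x \tilde F$ at $\rho$, and recovering the chain-rule contribution $(V-\tilde V)\,\partial_\rho f\,\partial_x\rho$ requires a careful symmetrisation which trades the Lipschitz differences $f(s,x,\rho) - f(s,x,\tilde\rho)$ and $\partial_x f(s,x,\rho) - \partial_x f(s,x,\tilde\rho)$ against the BV derivative $\partial_x\rho$, relying on the $\C2$ regularity of $f$ in its last two variables from~\eqref{eq:hpf} together with the BV bound of~\eqref{eq:tv}.
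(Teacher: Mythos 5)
Your proposal is correct and follows essentially the same route as the paper: Kru\v{z}kov doubling with the tensorised test function, collapse onto the diagonal, identification of the source error as the full distributional derivative $\partial_x\bigl[f(s,x,\rho)(V-\tilde V)\bigr]$ (chain rule on $\rho$ justified by the $\BV$ bound of Proposition~\ref{prop:tv}), and the final choice of a test function approximating $\caratt{[\tau,t]}$. You also correctly single out the genuinely delicate step --- the symmetrisation needed to convert the asymmetric limit into the term $(V-\tilde V)\,\partial_\rho f\,\partial_x\rho$, which the paper carries out via a regularisation $s_\alpha$ of the sign function traded against the $\BV$ derivative.
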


\begin{proof}
  The proof is based on the \emph{doubling of variables method}
  introduced by Kru\v zkov in~\cite{Kruzkov}. In particular, we follow
  the lines of~\cite[Theorem~1.3]{KarlsenRisebro}, although there the
  flux function has the form $l (x) \, g (\rho)$, while here it is of
  the form $f (t,x,\rho) \, V (t,x)$. The dependence on time does not
  add any difficulties in the proof, while the dependence of $f$ on
  the space variable $x$ produces additional terms.

  Let $\phi \in \Cc\infty (]0;T[ \times \reali; \reali^+)$ be a test
  function as in the definition of solution by Kru\v zkov. Let
  $Y \in \Cc\infty (\reali;\reali^+)$ be such that
  \begin{align*}
    Y(z) = \
    & Y(-z),
    &
      Y(z) = \
    &0 \mbox{ for }  \modulo{z}\geq 1,
    &
      \int_{\R} Y(z) \d{z} = \
    &1,
  \end{align*}
  and define $Y_h =\frac1h Y\! \left(\frac{z}{h}\right)$. Obviously
  $Y_h \in \Cc\infty (\reali; \reali^+)$, $Y_h (-z)=Y_h (z)$,
  $Y_h (z) = 0$ for $\modulo{z}\geq h$, $\int_\reali Y_h (z)\d{z}=1$
  and $Y_h \to \delta_0$ as $h \to 0$, where $\delta_0$ is the Dirac
  delta in $0$. Define, for $h>0$,
  \begin{equation}
    \label{eq:psih}
    \psi_h (t,x,s,y) =
    \phi \left(\frac{t+s}{2},\frac{x+y}{2}\right) \, Y_h (t-s) \, Y_h (x-y)
    =
    \phi \left(\cdots\right) \, Y_h (t-s) \, Y_h (x-y).
  \end{equation}
  Introduce the space $\Pi_T=\, ]0,T[\times\reali$. We derive the
  following entropy inequalities for the solutions $\rho=\rho(t,x)$
  and $\tilde{\rho}=\tilde{\rho}(s,y)$
  to~\eqref{eq:17}:
  \begin{align*}
    \iiiint\limits_{\Pi_T\times\Pi_T}
    \left\{\modulo{\rho-\tilde{\rho}} \, \partial_t\psi_h (t,x,s,y)
    + \sgn(\rho-\tilde{\rho}) V(t,x) \left( f(t,x,\rho)-f(t,x,\tilde \rho)\right)\, \partial_x\psi_h (t,x,s,y)\right.
    &
    \\
    \left. + \sgn(\rho-\tilde{\rho}) \, \partial_x \left[ f(t,x,\tilde{\rho}) \,V(t,x)\right] \psi_h  (t,x,s,y)
    \right\}\d{x}\d{t}\d{y}\d{s}
    & \geq 0
  \end{align*}
  and
  \begin{align*}
    \iiiint\limits_{\Pi_T\times\Pi_T}
    \left\{\modulo{\tilde{\rho} - \rho} \, \partial_t\psi_h (t,x,s,y)
    + \sgn(\tilde{\rho} - \rho) \tilde V(s,y) \left( f(s,y,\tilde\rho)-f(s,y, \rho)\right)\, \partial_y\psi_h (t,x,s,y)\right.
    &
    \\
    \left. + \sgn(\tilde\rho-\rho) \, \partial_y \left[ f(s,y,\rho) \, \tilde V(s,y)\right] \psi_h  (t,x,s,y)
    \right\}\d{x}\d{t}\d{y}\d{s}
    & \geq 0.
  \end{align*}
  Summing the two inequalities above and rearranging the terms
  therein, relying on the explicit form of the function
  $\psi_h$~\eqref{eq:psih}, we obtain
  \begin{align}
    \label{eq:5}
    \iiiint\limits_{\Pi_T \times\Pi_T}
    \biggl\{\modulo{\rho (t,x) -\tilde \rho (s,y)} \,
    \partial_t\phi  \left(\cdots\right) \, Y_h (t-s) \, Y_h (x-y)
    &
    \\
    \label{eq:I1}
    + \sgn(\rho -\tilde \rho) \left(
    V(t,x) f(t,x,\rho) - \tilde V(s,y) f(s,y,\tilde\rho)\right)
    \partial_x \phi  \left(\cdots\right) \, Y_h (t-s) \, Y_h (x-y)
    &
    \\
    \label{eq:I21a}
    + \sgn(\rho -\tilde \rho) \left(
    \tilde V(s,y) f(s,y,\tilde\rho) - V(t,x) f(t,x,\tilde\rho)\right)
    \partial_x \psi_h (t,x,s,y)
    &
    \\
    \label{eq:I21b}
    + \sgn(\rho -\tilde \rho) \left(
    \tilde V(s,y) f(s,y,\rho) - V(t,x) f(t,x,\rho)\right)
    \partial_y \psi_h (t,x,s,y)
    &
    \\
    \label{eq:I22}
    \left. + \sgn(\rho-\tilde \rho) \left[
    \partial_y \left( \tilde V(s,y)\, f(s,y,\rho) \right)
    -
    \partial_x \bigl( V(t,x) \, f(t,x,\tilde \rho)  \bigr)
    \right]
    \psi_h  (t,x,s,y)
    \right\}
    &
    \\
    \nonumber
    \d{x}\d{t}\d{y}\d{s}
    & \geq 0.
  \end{align}
  Consider~\eqref{eq:I21a} and~\eqref{eq:I21b}: explicit the function
  $\psi_h$ to obtain
  \begin{align}
    \nonumber
    & [\eqref{eq:I21a}] + [\eqref{eq:I21b}]
    \\
    \label{eq:I211}
    = \
    & \frac{ \sgn(\rho-\tilde \rho)}2
      \tilde V(s,y) \left(f(s,y,\tilde \rho) + f (s,y,\rho)\right)
      \partial_x \phi (\cdots) \, Y_h (t-s) \, Y_h (x-y)
    \\
    \label{eq:I211bis}
    & - \frac{ \sgn(\rho-\tilde \rho)}2
      V(t,x) \left(f(t,x,\tilde \rho) + f (t,x,\rho)\right)
      \partial_x \phi (\cdots) \, Y_h (t-s) \,Y_h (x-y)
    \\
    \label{eq:I212}
    & - \sgn(\rho-\tilde \rho)
      \tilde V (s,y)  \left(f(s,y,\tilde \rho) - f (s,y,\rho)\right)
      \phi (\cdots) \, Y_h (t-s) \, Y_h' (x-y)
    \\
    \label{eq:I212bis}
    & + \sgn(\rho-\tilde \rho)
      V (t,x)  \left(f(t,x, \rho) - f (t,x,\tilde \rho)\right)
      \phi (\cdots) \, Y_h (t-s) \, Y_h' (x-y).
  \end{align}
  In~\eqref{eq:I22} compute
  \begin{equation}
    \label{eq:9}
    \begin{aligned}
      [\eqref{eq:I22}] = \ & \sgn (\rho - \tilde \rho) \left[
        \partial_y \tilde V (s,y) \, f (s,y,\rho) + \tilde V (s,y)
        \, \partial_y f (s,y,\rho) \right.
      \\
      &\qquad -
      \partial_x V (t,x) \, f (t,x,\tilde \rho) - V (t,x)
      \, \partial_x f (t,x,\tilde\rho) \bigr] \psi_h (t,x,s,y).
    \end{aligned}
  \end{equation}
  Introduce the following notation
  \begin{equation}
    \label{eq:F}
    F \left(t,x,\rho (t,x), \tilde \rho(s,y)\right) \! =\!
    \sgn \left(\rho(t,x) - \tilde \rho (s,y)\right) \!\left(
      f \left(t,x,\rho(t,x)\right) - f \left(t,x,\tilde\rho(s,y)\right)\!
    \right),
  \end{equation}
  so that~\eqref{eq:I212} -- \eqref{eq:I212bis} now reads
  \begin{align}
    \nonumber
    & \iiiint\limits_{\Pi_T\times\Pi_T} [\eqref{eq:I212}] +  [\eqref{eq:I212bis}] \d{x}\d{t}\d{y}\d{s}
    \\
    \nonumber
    = \
    & \iiiint\limits_{\Pi_T\times\Pi_T}
      \left( V (t,x) F (t,x,\rho,\tilde \rho)
      -  \tilde V(s,y) F (s,y,\rho,\tilde \rho)\right)
      \phi \!\left(\cdots\right)  Y_h (t-s) \, Y_h' (x-y)
      \d{x}\d{t}\d{y}\d{s}
    \\
    \label{eq:q1}
    = \
    &
      - \!\!\iiiint\limits_{\Pi_T\times\Pi_T}\!
      \left( V (t,x) \frac{\d{}}{\d{x}} F (t,x,\rho,\tilde \rho)
      -  \tilde V(s,y) \frac{\d{}}{\d{x}} F (s,y,\rho,\tilde \rho)\right)
      \psi_h(t,x,s,y)
      \d{x}\d{t}\d{y}\d{s}
    \\
    \label{eq:q2}
    & - \!\!\iiiint\limits_{\Pi_T\times\Pi_T}\!
      \partial_x V (t,x) \, F (t,x,\rho,\tilde \rho) \,
      \psi_h (t,x,s,y)
      \d{x}\d{t}\d{y}\d{s}
    \\
    \label{eq:J}
    & - \!\!\iiiint\limits_{\Pi_T\times\Pi_T}  \frac12
      \left( V (t,x) F (t,x,\rho,\tilde \rho)
      -  \tilde V(s,y) F (s,y,\rho,\tilde \rho)\right)
      \partial_x \phi \!\left(\cdots\right)
    \\
    \nonumber
    & \qquad \qquad \qquad\qquad\qquad\qquad\qquad \times Y_h (t-s) \, Y_h (x-y)
      \d{x}\d{t}\d{y}\d{s},
  \end{align}
  where we also integrate by parts. Combine the integrand
  of~\eqref{eq:q2} together with~\eqref{eq:9} to get
  \begin{align}
    \nonumber
    & - \partial_x V (t,x) \, F (t,x,\rho,\tilde \rho) \, \psi_h (t,x,s,y) + [\eqref{eq:9}]
    \\
    \label{eq:nonloso}
    = \
    & \sgn (\rho - \tilde \rho)
      \left(\partial_y \tilde V (s,y) \, f (s,y,\rho) -\partial_x V (t,x)\, f (t,x,\rho) \right) \, \psi_h (t,x,s,y)
    \\\label{eq:nonloso2}
    &
      + \sgn (\rho - \tilde \rho) \left( \tilde V (s,y) \, \partial_y f (s,y,\rho) - V (t,x) \partial_x f (t,x,\tilde\rho) \right)  \, \psi_h (t,x,s,y).
  \end{align}
  Observe that the following equality holds
  \begin{align}
    \nonumber
    & \iiiint\limits_{\Pi_T\times\Pi_T}
      [\eqref{eq:I1}]+[\eqref{eq:I211}]+[\eqref{eq:I211bis}]+[\eqref{eq:J}]
      \d{x}\d{t}\d{y}\d{s}
    \\
    \label{eq:4}
    = \
    &
      \iiiint\limits_{\Pi_T\times\Pi_T} \!\sgn (\rho -\tilde \rho) \,
      \tilde V (s,y)\! \left(f (s,y,\rho) - f (s,y,\tilde\rho)\right)
      \partial_x \phi (\cdots) \, Y_h (t-s) \, Y_h (x-y)
      \d{x}\d{t}\d{y}\d{s}.
  \end{align}
  We are therefore left with
  \begin{equation}
    \label{eq:partenza}
    \iiiint\limits_{\Pi_T\times\Pi_T}\!  [\eqref{eq:5}] \!+\!
    [\eqref{eq:q1}] \!+\! [\eqref{eq:nonloso}] \!+\! [\eqref{eq:nonloso2}]
    \!+\! [\eqref{eq:4}]
    \d{x}\d{t}\d{y}\d{s}
    \geq 0.
  \end{equation}
  Let now $h$ go to $0$. The terms in~\eqref{eq:5} and~\eqref{eq:4}
  can be treated exactly as in~\cite{Kruzkov}, leading to
  \begin{align}
    \nonumber
    & \lim_{h \to 0+}\iiiint\limits_{\Pi_T \times \Pi_T}
      \left\{[\eqref{eq:5}] \!+\! [\eqref{eq:4}]
      \right\}\d{x}\d{t}\d{y}\d{s}
    \\ \label{eq:h1}
    \!\!= \
    & \iint\limits_{\Pi_T} \bigl\{
      \modulo{\rho (t,x) - \tilde \rho (t,x)} \partial_t \phi (t,x)\\ \label{eq:h2}
    & + \sgn \left(\rho (t,x) - \tilde\rho (t,x)\right) \tilde V (t,x)
      \left(f \left(t,x,\rho (t,x)\right) - f\left(t,x,\tilde\rho (t,x) \right)\right) \partial_x\phi (t,x)\bigr\}
      \d{x}\d{t}.
  \end{align}
  Regarding~\eqref{eq:nonloso},  we simplify the notation by introducing
  the map
  \begin{displaymath}
    \Upsilon (t,x,s,y) \! = \!
    \sgn\left(\rho (t,x) \! - \!\tilde \rho (s,y)\right) \!\!
    \left(\partial_y \tilde V (s,y) f \left(s,y, \rho \right) - \partial_x V (t,x) f \left(t,x,\rho\right)\right)
    \phi\! \left(\frac{t+s}2, \frac{x+y}2\right)\!\!,
  \end{displaymath}
  so that
  \begin{align}
    \nonumber
    & [\eqref{eq:nonloso}]
    \\ \nonumber
    = \
    &\Upsilon (t,x,s,y) \, Y_h (t-s) \, Y_h(x-y)
    \\ \nonumber
    =\
    & \Upsilon (t,x,t,x) \, Y_h (t-s) \, Y_h (x-y)
      + \left(\Upsilon (t,x,s,y) - \Upsilon (t,x,t,x) \right)\, Y_h (t-s) \, Y_h (x-y)
    \\
    \label{eq:10}
    = \
    &  \sgn\left(\rho (t,x)  -\tilde \rho (t,x) \right)
      \left(\partial_x \tilde V (t,x)  - \partial_x V (t,x) \right) f \left(t,x,\rho \right)
      \phi\! \left(t,x\right)  \, Y_h (t-s) \, Y_h (x-y)
    \\
    \label{eq:10bis}
    &+
      \left(\Upsilon (t,x,s,y) - \Upsilon (t,x,t,x) \right)\, Y_h (t-s) \, Y_h (x-y) .
  \end{align}
  It is immediate to see that
  \begin{align}
    \nonumber
    & \iiiint\limits_{\Pi_T\times\Pi_T} [\eqref{eq:10}] \d{x}\d{t}\d{y}\d{s}
    \\ \label{eq:10ok}
    = \
    & \iint\limits_{\Pi_T} \sgn\left(\rho (t,x)  -\tilde \rho (t,x)\right)
      \left( \partial_x \tilde V (t,x)  - \partial_x V (t,x) \right)  f \left(t,x,\rho (t,x)\right) \,
      \phi\! \left(t,x\right) \d{x}\d{t}.
  \end{align}
  Concerning~\eqref{eq:10bis}, it vanishes as $h$ goes to $0$ when
  integrated over $\Pi_T \times \Pi_T$. Indeed, recall that
  $\modulo{Y_h} \leq \left(Y (0)/h\right) \caratt{[-h,h]}$ and
  apply~\cite[Lemma~6.2]{ColomboRossiBordo}, see
  also~\cite[Lemma~2]{Kruzkov}, with $N=3$, $X= (x,t,x)$, $Y= (x,t,y)$
  and
  \begin{displaymath}
    w (s,Y) = \frac{\left(Y (0)\right)^2}{h^2} \, \Upsilon (t,x,s,y).
  \end{displaymath}
  Focus the attention on~\eqref{eq:q1}. With abuse of notation, since
  the function $F$ is only Lipschitz continuous with respect to
  $\rho$, we write
  \begin{align}
    \nonumber
    &  \frac{\d{}}{\d{x}}F  \left(t,x,\rho (t,x), \tilde \rho (s,y) \right)
    \\
    \nonumber
    = \
    & \partial_x F  \left(t,x,\rho (t,x), \tilde \rho (s,y) \right)
      + \partial_\rho F  \left(t,x,\rho (t,x), \tilde \rho (s,y) \right)   \partial_x \rho (t,x)
    \\
    \label{eq:6}
    = \
    & \sgn\left(\rho (t,x) - \tilde \rho (s,y)\right)
      \left(\partial_x f \left(t,x, \rho (t,x)\right) - \partial_x f \left(t,x, \tilde \rho (s,y)\right)
      \right)
    \\
    \label{eq:6bis}
    & + \partial_\rho F  \left(t,x,\rho (t,x), \tilde \rho (s,y) \right)   \partial_x \rho (t,x)
  \end{align}
  and
  \begin{align}
    \nonumber
    & \frac{\d{}}{\d{x}}  F  \left(s,y,\rho (t,x), \tilde \rho (s,y) \right)
    \\ \nonumber
    =\
    & \partial_\rho F  \left(s,y,\rho (t,x), \tilde \rho (s,y) \right)   \partial_x \rho (t,x)
    \\
    \label{eq:7}
    = \
    & \partial_\rho F  \left(t,x,\rho (t,x), \tilde \rho (t,x) \right)   \partial_x \rho (t,x)
    \\
    \label{eq:7bis}
    & + \left(
      \partial_\rho F  \left(s,y,\rho (t,x), \tilde \rho (s,y) \right)
      - \partial_\rho F  \left(t,x,\rho (t,x), \tilde \rho (t,x) \right)
      \right) \partial_x \rho (t,x).
  \end{align}
  In particular observe that we can combine~\eqref{eq:nonloso2}
  with~\eqref{eq:6} to get
  \begin{align}
    \nonumber
    & [\eqref{eq:nonloso2}] - V (t,x) \sgn\left(\rho (t,x) - \tilde \rho (s,y)\right)\!
      \left(\partial_x f \left(t,x, \rho (t,x)\right)\! - \partial_x f \left(t,x, \tilde \rho (s,y)\right)
      \right) \psi_h (t,x,s,y)
    \\
    \label{eq:11}
    = \
    & \sgn\left(\rho (t,x) - \tilde \rho (s,y)\right) \left(
      \tilde V (s,y) \partial_y f\left(s,y,\rho\right)
      \!-\!
      V (t,x) \partial_x f \left(t,x, \rho\right)
      \right)
      \psi_h (t,x,s,y).
  \end{align}
  An application of~\cite[Lemma~6.2]{ColomboRossiBordo} yields
  \begin{align}
    \nonumber
    &\lim_{h \to 0} \iiiint\limits_{\Pi_T\times \Pi_T} [\eqref{eq:11}] \d{x}\d{t}\d{y}\d{s}
    \\
    \label{eq:11ok}
    = \
    & \iint\limits_{\Pi_T}
      \sgn\left(\rho (t,x) - \tilde \rho (t,x)\right) \left(
      \tilde V (t,x) -  V (t,x) \right) \partial_x f \left(t,x, \rho (t,x)\right)
      \phi (t,x) \d{x}\d{t}.
  \end{align}

  In order to deal with the remaining terms, i.e.~\eqref{eq:6bis},
  \eqref{eq:7} and~\eqref{eq:7bis}, we need to introduce a
  regularisation of the sign function. In particular, for $\alpha > 0$
  set
  \begin{displaymath}
    s_\alpha (u) = \left(\sgn * Y_\alpha\right) (u).
  \end{displaymath}
  Observe that
  $s_\alpha' (u) = \dfrac{2}{\alpha} \, Y\! \left(\dfrac u
    \alpha\right)$.  Recall the definition of the map $F$~\eqref{eq:F}
  and compute
  \begin{align}
    \nonumber
    & \iiiint\limits_{\Pi_T\times \Pi_T} -V(t,x) \times [\eqref{eq:6bis}] \times \psi_h (t,x,s,y) \d{x} \d{t} \d{y}\d{s}
    \\
    \label{eq:nuovo1}
    = \
    & \lim_{\alpha \to 0} \iiiint\limits_{\Pi_T \times \Pi_T}
      \bigl\{ s'_\alpha \left(\rho(t,x)-\tilde \rho(s,y)\right) \left(f \left(t,x,\rho(t,x)\right)-f\left(t,x,\tilde\rho(s,y)\right) \right)
    \\
    \label{eq:nuovo3}
    & \qquad\qquad\quad
      + s_\alpha \left(\rho(t,x)-\tilde \rho(s,y)\right) \partial_\rho f\left(t,x,\rho(t,x)\right)\bigr\}
    \\[2pt]  \label{eq:nuovo}
    & \qquad \qquad
      \times \left( -V(t,x)\right)  \partial_x \rho(t,x) \psi_h (t,x,s,y) \d{x} \d{t} \d{y}\d{s}.
  \end{align}
  By the Dominated Convergence Theorem, as $\alpha$ goes to $0$, we
  get
  \begin{displaymath}
    \iiiint\limits_{\Pi_T\times \Pi_T} [\eqref{eq:nuovo1}] \times [\eqref{eq:nuovo}]\d{x} \d{t} \d{y} \d{s} \to 0.
  \end{displaymath}
  Indeed,
  \begin{align*}
    & \modulo{\frac{2}{\alpha} \, Y\left(\frac{\rho - \tilde \rho}\alpha \right)
      \, \left(f\left(t,x,\rho (t,x)\right) - f \left(t,x,\tilde \rho (s,y)\right)\right)
      V (t,x) \, \partial_x \rho (t,x) \, \psi_h (t,x,s,y)
      }
    \\
    \leq \
    &  \frac{2}{\alpha} \, Y\left(\frac{\rho - \tilde \rho}\alpha \right)
      \int_{\tilde \rho}^\rho \modulo{\partial_\rho f (s,y,r)}\d{r} \,
      \norma{V}_{\L\infty (\Pi_T;\reali)} \,  \modulo{\partial_x \rho (t,x)} \,
      \psi_h (t,x,s,y)
    \\
    \leq \
    & 2 \, \norma{Y}_{\L\infty (\reali;\reali)} \,
      \norma{\partial_\rho f }_{\L\infty (\Pi_T \times \reali;\reali)}
      \norma{V}_{\L\infty (\Pi_T;\reali)} \modulo{\partial_x \rho (t,x)} \,
      \psi_h (t,x,s,y) \quad \in \L1 (\Pi_T \times \Pi_T; \reali).
  \end{align*}
  Therefore we have
  \begin{align}
    \nonumber
    & \iiiint\limits_{\Pi_T\times \Pi_T} -V(t,x) \times [\eqref{eq:6bis}] \times \psi_h (t,x,s,y)  \d{x} \d{t} \d{y} \d{s}
    \\
    \label{eq:13}
    = \
    & \!\! - \!\! \iiiint\limits_{\Pi_T\times \Pi_T} \!\sgn\left(\rho(t,x)-\tilde \rho(s,y)\right) V(t,x) \, \partial_\rho f\left(t,x,\rho(t,x)\right)
      \partial_x \rho(t,x) \, \psi_h (t,x,s,y) \d{x} \d{t} \d{y} \d{s}.
  \end{align}
  The term
  \begin{displaymath}
    \iiiint\limits_{\Pi_T\times \Pi_T} \tilde V(s,y) \times [\eqref{eq:7}] \times \psi_h (t,x,s,y)  \d{x} \d{t} \d{y} \d{s}
  \end{displaymath}
  can be treated exactly in the same way, leading to
  \begin{equation}
    \label{eq:14}
    \!\!\!\!\iiiint\limits_{\Pi_T\times \Pi_T} \sgn\left(\rho(t,x)-\tilde \rho(t,x)\right) \tilde V(s,y) \, \partial_\rho f\left(t,x,\rho(t,x)\right)
    \partial_x \rho(t,x) \, \psi_h (t,x,s,y) \d{x} \d{t} \d{y} \d{s}.
  \end{equation}
  Introduce now the notation
  \begin{displaymath}
    \Upsilon(s,y) = \sgn\left(\rho(t,x)-\tilde \rho(t,x)\right) \tilde V(s,y) - \sgn\left(\rho(t,x)-\tilde \rho(s,y)\right) V(t,x)
  \end{displaymath}
  and apply \cite[Lemma 6.2]{ColomboRossiBordo}:
  \begin{align}
    \nonumber
    & \lim_{h \to 0} [\eqref{eq:14}] + [\eqref{eq:13}]
    \\ \label{eq:ok?}
    \!\!= \
    & \!\! \iint\limits_{\Pi_T}
      \sgn\left(\rho(t,x)-\tilde \rho(t,x)\right) \! \left(\tilde V(t,x)- V(t,x)\right)
      \partial_\rho f \left(t,x,\rho(t,x)\right) \, \partial_x \rho(t,x) \, \phi (t,x) d{x} \d{t}.
  \end{align}

  In order to deal with the last term, i.e.~\eqref{eq:7bis}, exploit
  the same regularisation of the sign function as above and compute
  \begin{align}
    \label{eq:8}
    & \iiiint\limits_{\Pi_T\times \Pi_T} \tilde V (s,y) \times [\eqref{eq:7bis}] \times \psi_h (t,x,s,y) \d{x}\d{t}\d{y}\d{s}
    \\ = \
    \label{eq:12}
    & \lim_{\alpha \to 0} \iiiint\limits_{\Pi_T\times \Pi_T}
      \bigl[
      s'_\alpha \left(\rho (t,x) -\tilde \rho (s,y)\right) \, \left(f\left(s,y,\rho (t,x)\right) - f \left(s,y,\tilde \rho (s,y)\right)\right)
    \\
    \label{eq:12b}
    & \qquad\qquad\quad
      - s'_\alpha\left(\rho (t,x) -\tilde \rho (t,x)\right) \, \left(f\left(t,x,\rho (t,x)\right) - f \left(t,x,\tilde \rho (t,x)\right)\right)
    \\
    \nonumber
    & \qquad\qquad\quad
      + s_\alpha \left(\rho (t,x) -\tilde \rho (s,y)\right) \partial_\rho f\left(s,y,\rho (t,x)\right)
    \\
    \nonumber
    & \qquad\qquad\quad
      -  s_\alpha\left(\rho (t,x) -\tilde \rho (t,x)\right) \partial_\rho f\left(t,x,\rho (t,x)\right)
      \bigr]
    \\
    \label{eq:12t}
    & \qquad \qquad
      \times    \tilde V (s,y) \, \partial_x \rho (t,x) \, \psi_h (t,x,s,y) \d{x}\d{t} \d{y} \d{s}.
  \end{align}
  By the Dominated Convergence Theorem, as $\alpha$ goes to $0$, we
  get
  \begin{align*}
    \iiiint\limits_{\Pi_T\times \Pi_T}[\eqref{eq:12}]\times[\eqref{eq:12t}]  \d{x}\d{t} \d{y} \d{s}\to \
    & 0,
    &
      \iiiint\limits_{\Pi_T\times \Pi_T} [\eqref{eq:12b}]\times[\eqref{eq:12t}] \d{x}\d{t} \d{y} \d{s} \to \
    & 0.
  \end{align*}
  Indeed,
  \begin{align*}
    & \modulo{\frac{2}{\alpha} \, Y\left(\frac{\rho - \tilde \rho}\alpha \right)
      \, \left(f\left(s,y,\rho (t,x)\right) - f \left(s,y,\tilde \rho (s,y)\right)\right)
      \tilde V (s,y) \, \partial_x \rho (t,x) \, \psi_h (t,x,s,y)
      }
    \\
    \leq \
    &  \frac{2}{\alpha} \, Y\left(\frac{\rho - \tilde \rho}\alpha \right)
      \int_{\tilde \rho}^\rho \modulo{\partial_\rho f (s,y,r)}\d{r}
      \norma{\tilde V}_{\L\infty (\Pi_T;\reali)} \modulo{\partial_x \rho (t,x)}
      \psi_h (t,x,s,y)
    \\
    \leq \
    & 2 \, \norma{Y}_{\L\infty (\reali;\reali)} \,
      \norma{\partial_\rho f }_{\L\infty (\Pi_T \times \reali;\reali)}
      \norma{\tilde V}_{\L\infty (\Pi_T;\reali)} \modulo{\partial_x \rho (t,x)}
      \psi_h (t,x,s,y) \quad \in \L1 (\Pi_T \times \Pi_T; \reali).
  \end{align*}
  Therefore we have
  \begin{align*}
    & [\eqref{eq:8}]
    \\
    =\
    & \iiiint\limits_{\Pi_T\times \Pi_T}
      \bigl[ \sgn \left(\rho (t,x) -\tilde \rho (s,y)\right) \partial_\rho f\left(s,y,\rho (t,x)\right)
      -  \sgn \left(\rho (t,x) -\tilde \rho (t,x)\right) \partial_\rho f\left(t,x,\rho (t,x)\right)
      \bigr]
    \\
    & \qquad \qquad
      \times    \tilde V (s,y) \, \partial_x \rho (t,x) \, \psi_h (t,x,s,y) \d{x}\d{t} \d{y} \d{s}.
  \end{align*}
  Introduce the notation
  $\Upsilon (s,y) = \sgn \left(\rho (t,x) -\tilde \rho
    (s,y)\right) \partial_\rho f\left(s,y,\rho (t,x)\right)$ and
  rewrite the equality above as follows
  \begin{align*}
    [\eqref{eq:8}]
    \leq \
    & \norma{\tilde V}_{\L\infty (\Pi_T;\reali)} \iiiint\limits_{\Pi_T\times \Pi_T}
      \modulo{\Upsilon (s,y) -\Upsilon (t,x)}
      \modulo{\partial_x \rho (t,x)} \, \psi_h (t,x,s,y) \d{x}\d{t} \d{y} \d{s},
  \end{align*}
  the left hand side clearly vanishing as $h$ goes to $0$, thanks
  to~\cite[Lemma~6.2]{ColomboRossiBordo} and to the fact that $\rho$
  has bounded variation.

  Collecting together all the estimates obtained in~\eqref{eq:h1},
  ~\eqref{eq:h2}, \eqref{eq:10ok}, \eqref{eq:11ok} and \eqref{eq:ok?},
  we get
  \begin{align}
    \!\!\!
    \nonumber
    &    \lim_{h \to 0} [\eqref{eq:partenza}]
    \\ \!\!\!
    \label{eq:arrivo1}
    = \
    & \!\!\iint\limits_{\Pi_T} \bigl\{
      \modulo{\rho (t,x) - \tilde \rho (t,x)} \partial_t \phi (t,x)
    \\ \!\!\!
    \nonumber
    & + \sgn \left(\rho (t,x) - \tilde\rho (t,x)\right) \tilde V (t,x)
      \left(f \left(t,x,\rho (t,x)\right) - f\left(t,x,\tilde\rho (t,x) \right)\right) \partial_x\phi (t,x)
    \\ \!\!\!
    \nonumber
    & + \sgn\left(\rho (t,x)  -\tilde \rho (t,x) \right)
      \left( \partial_x \tilde V (t,x)  - \partial_x V (t,x) \right)  f \left(t,x,\rho \right)
      \phi\! \left(t,x\right)
    \\\!\!\!
    \nonumber
    & +
      \sgn\left(\rho (t,x) - \tilde \rho (t,x)\right) \left(
      \tilde V (t,x) -  V (t,x) \right) \partial_x f \left(t,x, \rho (t,x)\right)
      \phi (t,x)
    \\ \!\!\!
    \label{eq:arrivo2}
    & + \sgn\left(\rho (t,x) - \tilde \rho (t,x)\right)\!
      \left(\tilde V (t,x) - V (t,x)\right)\!
      \partial_\rho f  \left(t,x,\rho (t,x) \right) \,
      \partial_x \rho (t,x) \, \phi\left(t,x\right)\bigr\}\!
      \d{x}\d{t}\!.
  \end{align}

  \smallskip

  Let now $h>0$ and $r>1$. Fix $0 < \tau < t <T$, define
  \begin{displaymath}
    \Phi_h (s) = \alpha_h (s - \tau) - \alpha_h (s-t),
    \qquad \mbox{where} \quad
    \alpha_h (z) = \int_{-\infty}^{z} Y_h (\zeta) \d\zeta,
  \end{displaymath}
  and
  \begin{displaymath}
    \Psi_r(x)=\int_{\R} Y (\modulo{x-y}) \, \caratt{\left\{\modulo{y}<r\right\}} (y) \d{y}.
  \end{displaymath}
  Observe that, as $h$ goes to $0$, $\Phi_h \to \caratt{[\tau,t]}$,
  and $\Phi'_h\to \delta_{\tau} - \delta_{t} $. Moreover,
  $\Psi'_r (x) = 0$ for $\modulo{x}< r -1$ or $\modulo{x}> r+1$ and,
  as $r$ tends to $+ \infty$, $\Psi_r \to \caratt{\R}$.  Choose
  $\phi(t,x) = \Phi_h (t) \, \Psi_r (x)$
  in~[\eqref{eq:arrivo1}$\cdots$\eqref{eq:arrivo2}] and pass to the
  limits $h \to 0$ and $r \to + \infty$ to obtain the desired
  estimate~[\eqref{eq:19}--\eqref{eq:20}]:
  \begin{align*}
    \int_{\R} \modulo{\rho (\tau,x) - \tilde \rho (\tau,x)} \d{x}
    -\int_{\R} \modulo{\rho (t,x) - \tilde \rho (t,x)} \d{x}
    &
    \\
    + \int_{\tau}^{t} \! \int_{\R}
    \big\{\modulo{\partial_x \tilde V (s,x)  - \partial_x V (s,x)} \, \modulo{f \left(s,x,\rho (s,x) \right)}
    &
    \\
    + \modulo{\tilde V (s,x) -  V (s,x)} \,\modulo{\partial_x f \left(s,x, \rho (s,x) \right)}
    &
    \\[2pt]
    + \modulo{\tilde V (s,x) - V (s,x)} \,
    \modulo{\partial_\rho f  \left(s,x,\rho (s,x) \right) } \,
    \modulo{\partial_x \rho (s,x) }\big\}
    & \d{x}\d{s} \geq \   0.
  \end{align*}
\end{proof}

\smallskip

\begin{proofof}{Theorem~\ref{thm:stabw}}
  We can apply Lemma~\ref{lem:dv} to problems~\eqref{eq:rho}
  and~\eqref{eq:rhot}. By Lemma~\ref{lem:Linf}, with obvious notation,
  for all $t \in [0,T]$ we have
  \begin{align*}
    \norma{\rho (t)}_{\L\infty (\R;\R)} \leq \
    &\norma{\rho_o}_{\L\infty (\R;\R)} \, e^{\mathcal{L} \, t}= M_t,
    &
      \norma{\tilde \rho (t)}_{\L\infty (\R;\R)} \leq \
    &\norma{\tilde \rho_o}_{\L\infty (\R;\R)} \, e^{\tilde {\mathcal{L}} \, t}=\tilde M_t.
  \end{align*}
  For the sake of simplicity introduce the space
  \begin{equation}
    \label{eq:sigma}
    \Sigma_t = [0,t] \times \R \times [0, \max\{M_t, \tilde M_t\}].
  \end{equation}
  Let $\tau \to 0$ in [\eqref{eq:19}$\cdots$\eqref{eq:20}]:
  \begin{align}
    \label{eq:4z}
    \norma{\rho(t)-\tilde \rho(t)}_{\L1(\R;\R)}
    \leq \
    & \norma{\rho_o-\tilde \rho_o}_{\L1(\R;\R)}
    \\
    \label{eq:4a}
    &  +
      \int_0^t\norma{f}_{\L\infty (\Sigma_s; \R)}  \int_{\R}
      \modulo{\partial_x V (s,x) -\partial_x \tilde V (s,x)} \d{x} \d{s}
    \\
    \label{eq:4c}
    & +  \int_{0}^{t}  \norma{\partial_x f}_{\L\infty (\Sigma_s; \R)} \int_{\R}
      \modulo{\tilde V (s,x)  - V (s,x)} \d{x} \d{s}
    \\
    \label{eq:4b}
    & +\int_{0}^{t} \norma{\partial_\rho f}_{\L\infty (\Sigma_s; \R)}   \int_{\R}
      \modulo{\partial_x \rho (s,x)} \, \modulo{V (s,x) -\tilde V (s,x)} \d{x}\d{s}  .
  \end{align}
  Consider~\eqref{eq:4a}. By the definitions of $V$ and $\tilde V$,
  compute
  \begin{align*}
    & \int_{\R} \modulo{\partial_x V (s,x) -\partial_x \tilde V (s,x)} \d{x}
    \\
    = \
    &
      \int_{\R}
      \modulo{v'((\rho (s) * w) (x)) \, \left(\rho (s) * \partial_x w\right)\! (x) -
      v'((\tilde\rho (s) * \tilde w) (x)) \, \left(\tilde \rho (s) * \partial_x \tilde w\right)\! (x)
      }\d{x}
    \\
    \leq \
    & \norma{v'}_{\L\infty (\R;\R)}
      \left(
      \norma{\rho(s)-\tilde \rho(s)}_{\L1(\R;\R)} \,
      \min\left\{ \norma{\partial_x w}_{\L1 (\R;\R)}, \norma{\partial_x \tilde w}_{\L1 (\R;\R)} \right\}
      \right.
    \\
    &\qquad\qquad\qquad \left.
      +
      \, \norma{\partial_x w - \partial_x \tilde w}_{\L1(\R;\R)} \,
      \min\left\{ \norma{\rho (s)}_{\L1 (\R;\R)}, \norma{\tilde \rho (s)}_{\L1 (\R;\R)} \right\}
      \right)
    \\
    &
      + \, \norma{v''}_{\L\infty (\R;\R)}
      \min\left\{  \norma{\rho (s)}_{\L1 (\R;\R)} \norma{\partial_x w}_{\L\infty (\R;\R)},
      \norma{\tilde \rho (s)}_{\L1 (\R;\R)}\norma{\partial_x \tilde w}_{\L\infty (\R;\R)} \right\}
    \\
    & \quad \times
      \left(
      \norma{\rho(s)-\tilde \rho(s)}_{\L1(\R;\R)} \,
      \min\left\{ \norma{w}_{\L1 (\R;\R)}, \norma{\tilde w}_{\L1 (\R;\R)} \right\}
      \right.
    \\
    & \qquad\qquad \left.
      +
      \norma{w - \tilde w}_{\L1(\R;\R)} \,
      \min\left\{ \norma{\rho (s)}_{\L1 (\R;\R)}, \norma{\tilde \rho (s)}_{\L1 (\R;\R)} \right\}
      \right)
    \\
    \leq \
    &
      \left(
      \norma{v'}_{\L\infty (\R;\R)}
      +
      \norma{v''}_{\L\infty (\R;\R)}
      \min\left\{  \norma{\rho_o}_{\L1 (\R;\R)} \norma{\partial_x w}_{\L\infty (\R;\R)},
      \norma{\tilde \rho_o}_{\L1 (\R;\R)}\norma{\partial_x \tilde w}_{\L\infty (\R;\R)} \right\}
      \right)
    \\
    & \times
      \left(
      \norma{\rho(s)-\tilde \rho(s)}_{\L1(\R;\R)}
      \min\left\{ \norma{w}_{\W11 (\R;\R)}, \norma{\tilde w}_{\W11 (\R;\R)} \right\} \,
      \right.
    \\
    & \qquad \left. +
      \norma{w - \tilde w}_{\W11(\R;\R)} \,
      \min\left\{ \norma{\rho_o}_{\L1 (\R;\R)}, \norma{\tilde \rho_o}_{\L1 (\R;\R)} \right\}
      \right),
  \end{align*}
  where we exploit also Lemma~\ref{lem:L1}.  Therefore,
  \begin{align}
    \nonumber
    &[\eqref{eq:4a}]
    \\
    \nonumber
    \leq \
    &
      \left(
      \norma{v'}_{\L\infty (\R;\R)}
      +
      \norma{v''}_{\L\infty (\R;\R)}
      \min\left\{ \norma{\rho_o}_{\L1 (\R;\R)}\norma{\partial_x w}_{\L\infty (\R;\R)},
      \norma{\tilde \rho_o}_{\L1 (\R;\R)} \norma{\partial_x \tilde w}_{\L\infty (\R;\R)} \right\}
      \right)
    \\
    \label{eq:4aok}
    & \times
      \left(
      \min\left\{ \norma{w}_{\W11 (\R;\R)}, \norma{\tilde w}_{\W11 (\R;\R)} \right\} \,
      \int_0^t  \norma{f}_{\L\infty (\Sigma_s;\R)}  \, \norma{\rho(s)-\tilde \rho(s)}_{\L1(\R;\R)} \d{s}
      \right.
    \\
    \nonumber
    & \qquad \left. +
      \, \norma{w - \tilde w}_{\W11(\R;\R)} \, \norma{f}_{\L\infty (\Sigma_t;\R)} \,
      \min\left\{ \norma{\rho_o}_{\L1 (\R;\R)}, \norma{\tilde \rho_o}_{\L1 (\R;\R)} \right\} \, t
      \right).
  \end{align}
  Consider~\eqref{eq:4c}: compute
  \begin{align*}
    \int_{\R}\modulo{V(s,x)-\tilde V(s,x)}\d{x}
    \leq \
    & \norma{v'}_{\L\infty (\reali;\reali)} \,
      \min\left\{\norma{w}_{\L1(\R;\R)},\norma{\tilde w}_{\L1(\R;\R)}\right\} \,
      \norma{\rho(s)-\tilde \rho(s)}_{\L1(\R;\R)}
    \\
    & + \norma{v'}_{\L\infty (\reali;\reali)}
      \min\left\{\norma{\rho(s)}_{\L1(\R;\R)},\norma{\tilde \rho(s)}_{\L1(\R;\R)}\right\}
      \norma{w-\tilde w}_{\L1(\R;\R)}
    \\
    \leq \
    &  \norma{v'}_{\L\infty (\reali;\reali)}\,
      \min\left\{\norma{w}_{\L1(\R;\R)},\norma{\tilde w}_{\L1(\R;\R)}\right\} \,
      \norma{\rho (s) - \tilde \rho (s)}_{\L1(\R;\R)}
    \\
    & + \norma{v'}_{\L\infty (\reali;\reali)}\,
      \min\left\{\norma{\rho_o}_{\L1(\R;\R)},\norma{\tilde \rho_o}_{\L1(\R;\R)}\right\}
      \norma{w-\tilde w}_{\L1(\R;\R)}.
  \end{align*}
  In this way we have
  \begin{align}
    \nonumber
    [\eqref{eq:4c}]\leq \
    &  \norma{v'}_{\L\infty (\reali;\reali)}
      \min\!\left\{\norma{w}_{\L1(\R;\R)},\norma{\tilde w}_{\L1(\R;\R)}\right\}
      \int_{0}^{t} \norma{\partial_x f}_{\L\infty(\Sigma_s;\R)} \, \norma{\rho(s)-\tilde \rho(s) }_{\L1(\R;\R)}\d{s}
    \\
    \label{eq:4cok}
    & + \norma{\partial_x f}_{\L\infty(\Sigma_t;\R)} \, \norma{v'}_{\L\infty (\reali;\reali)} \,
      \min\left\{\norma{\rho_o}_{\L1(\R;\R)},\norma{\tilde \rho_o}_{\L1(\R;\R)}\right\} \,
      \norma{w-\tilde w}_{\L1(\R;\R)}\, t .
  \end{align}
  Finally, consider~\eqref{eq:4b} and compute
  \begin{align*}
    \modulo{V (s,x) -\tilde V (s,x)}
    \leq \
    & \norma{v'}_{\L\infty(\R;\R)}\,
      \min\left\{\norma{w}_{\L\infty(\R;\R)}, \norma{\tilde w}_{\L\infty(\R;\R)} \right\}
      \norma{\rho(s)-\tilde \rho(s)}_{\L1(\R;\R)}
    \\
    &  + \norma{v'}_{\L\infty(\R;\R)}\,
      \min\left\{\norma{\rho(s)}_{\L\infty (\R;\R)},\norma{\tilde \rho(s)}_{\L\infty (\R;\R)}\right\} \,
      \norma{w-\tilde w}_{\L1(\R;\R)}
    \\
    \leq \
    & \norma{v'}_{\L\infty(\R;\R)}\,
      \min\left\{\norma{w}_{\L\infty(\R;\R)}, \norma{\tilde w}_{\L\infty(\R;\R)} \right\}
      \norma{\rho(s)-\tilde \rho(s)}_{\L1(\R;\R)}
    \\
    &  + \norma{v'}_{\L\infty(\R;\R)}\,
      \min\left\{M_s, \tilde M_s \right\} \,
      \norma{w-\tilde w}_{\L1(\R;\R)}.
  \end{align*}
  Hence,
  \begin{align}
    \nonumber
    [\eqref{eq:4b}] \leq \
    &
      \norma{v'}_{\L\infty(\R;\R)}\,
      \min\left\{\norma{w}_{\L\infty(\R;\R)}, \norma{\tilde w}_{\L\infty(\R;\R)} \right\}
    \\ \label{eq:4bok}
    & \times
      \int_0^t \norma{\partial_\rho f}_{\L\infty (\Sigma_s; \R)}
      \tv \left(\rho (s)\right) \norma{\rho(s)-\tilde \rho(s)}_{\L1(\R;\R)} \d{s}
    \\  \nonumber
    & + \norma{\partial_\rho f}_{\L\infty (\Sigma_t; \R)}
      \tv \left(\rho (t)\right) \norma{v'}_{\L\infty(\R;\R)}\,
      \min\left\{M_t, \tilde M_t \right\} \,
      \norma{w-\tilde w}_{\L1(\R;\R)} \, t .
  \end{align}
  Therefore, the inequality [\eqref{eq:4z}$\cdots$\eqref{eq:4b}] can
  be estimated as follows
  \begin{align*}
    & \norma{\rho(t)-\tilde \rho(t)}_{\L1(\R; \reali)}
    \\
    \leq \ &
    \norma{\rho_o-\tilde{\rho}_o}_{\L1(\R;\reali)}
    + a(t) \, \norma{w-\tilde w}_{\W11(\R;\R)}
    + \int_0^t b (s) \, \norma{\rho(s)-\tilde \rho(s)}_{\L1(\R;\reali)}\d{s},
  \end{align*}
  where, thanks to the total variation estimate provided by
  Proposition~\ref{prop:tv},
  \begin{align}
    \label{eq:a}
    &  a(t)
    \\
    \nonumber
    = \
    & t \,
      \left[\min\left\{ \norma{\rho_o}_{\L1 (\R;\R)}, \norma{\tilde \rho_o}_{\L1 (\R;\R)} \right\}
      \norma{f}_{\L\infty (\Sigma_t;\R)} \right.
    \\ \nonumber
    &
      \times \!\left(
      \norma{v'}_{\L\infty (\R;\R)}
      +
      \norma{v''}_{\L\infty (\R;\R)}
      \min\left\{ \norma{\rho_o}_{\L1 (\R;\R)}\norma{\partial_x w}_{\L\infty (\R;\R)},
      \norma{\tilde \rho_o}_{\L1 (\R;\R)} \norma{\partial_x \tilde w}_{\L\infty (\R;\R)} \right\}\!
      \right)
    \\ \nonumber
    &  + \min\left\{ \norma{\rho_o}_{\L1 (\R;\R)}, \norma{\tilde \rho_o}_{\L1 (\R;\R)} \right\}
      \norma{\partial_x f}_{\L\infty(\Sigma_t;\R)} \,
      \norma{v'}_{\L\infty (\R;\R)}
    \\ \nonumber
    & \left. + \norma{\partial_\rho f}_{\L\infty (\Sigma_t; \R)} \, \left(\mathcal K_2 t+\tv (\rho_o)\right)\,e^{\mathcal K_1 t}\,
      \norma{v'}_{\L\infty(\R;\R)}\, \min\left\{M_t, \tilde M_t \right\} \right]
  \end{align}
  and
  \begin{align}
    \label{eq:b}
    & b(s)
    \\ \nonumber
    = \
    & \norma{f}_{\L\infty (\Sigma_s;\R)}  \min\left\{ \norma{w}_{\W11 (\R;\R)}, \norma{\tilde w}_{\W11 (\R;\R)} \right\}
    \\  \nonumber
    & \times \!
      \left(
      \norma{v'}_{\L\infty (\R;\R)}
      +
      \norma{v''}_{\L\infty (\R;\R)}
      \min\left\{ \norma{\rho_o}_{\L1 (\R;\R)}\norma{\partial_x w}_{\L\infty (\R;\R)},
      \norma{\tilde \rho_o}_{\L1 (\R;\R)} \norma{\partial_x \tilde w}_{\L\infty (\R;\R)} \right\}\!
      \right)
    \\ \nonumber
    & + \norma{\partial_x f}_{\L\infty(\Sigma_s;\R)} \, \norma{v'}_{\L\infty (\reali;\reali)} \,
      \min\left\{\norma{w}_{\L1(\R;\R)},\norma{\tilde w}_{\L1(\R;\R)}\right\}
    \\ \nonumber
    & + \norma{\partial_\rho f}_{\L\infty (\Sigma_s; \R)} \,  \left(\mathcal K_2 s+\tv (\rho_o)\right)\,e^{\mathcal K_1 s} \,
      \norma{v'}_{\L\infty(\R;\R)} \,
      \min\left\{\norma{w}_{\L\infty(\R;\R)}, \norma{\tilde w}_{\L\infty(\R;\R)} \right\},
  \end{align}
  $\mathcal{K}_1$ and $\mathcal{K}_2$ being specified in~\eqref{eq:2}.
  An application of Gronwall Lemma yields
  \begin{align*}
    \norma{\rho(t)-\tilde \rho(t)}_{\L1(\R; \reali)}
    \leq \
    & \norma{\rho_o-\tilde{\rho}_o}_{\L1(\R;\reali)}
      + a(t) \, \norma{w-\tilde w}_{\W11(\R;\R)}
    \\
    & + \int_0^t
      \left(\norma{\rho_o-\tilde{\rho}_o}_{\L1(\R;\reali)} + a(s)  \right)
      b (s) \, \exp\left(\int_s^t b (r) \d{r} \right)\d{s}.
  \end{align*}
  Since $a (s) \leq a (t)$ for any $s \in [0,t]$ and
  \begin{displaymath}
    \int_0^t  b (s) \, \exp\left(\int_s^t b (r) \d{r} \right)\d{s}
    =  \left[ - \exp \left(\int_s^t b (r) \d{r}\right) \right]_0^t
    = -1 +  \exp\left(\int_0^t b (r) \d{r} \right),
  \end{displaymath}
  we obtain
  \begin{equation}
    \label{eq:15}
    \norma{\rho(t)-\tilde \rho(t)}_{\L1(\R; \reali)}
    \leq
    \left( \norma{\rho_o-\tilde{\rho}_o}_{\L1(\R;\reali)} + a(t) \, \norma{w-\tilde w}_{\W11(\R;\R)}  \right) \exp\left(\int_0^t b (r) \d{r} \right),
  \end{equation}
  concluding the proof.
\end{proofof}

\begin{remark}
  {\rm Notice that, when $t=0$, the right hand side of~\eqref{eq:15}
    is equal to $\norma{\rho_o-\tilde{\rho}_o}_{\L1(\R;\reali)}$,
    since $a (0) = 0$.}
\end{remark}

\begin{remark}\label{rem:Magali}
  {\rm Compare our estimate~\eqref{eq:15} with the one
    in~\cite[Theorem~4.1]{Betancourt2011}:
    \begin{displaymath}
      \norma{\rho (t) - \tilde \rho (t)}_{\L1 (\reali;\reali)} \leq e^{C_2 \, t} \norma{\rho_o - \tilde \rho}_{\L1 (\reali;\reali)},
    \end{displaymath}
    where
    \begin{align*}
      C_2 = \
      \norma{f}_{\L\infty(\Sigma_t;\R)}
      & \left(
        \norma{v'}_{\L\infty(\R;\R) } \norma{\partial_x w}_{\L1(\R;\R)} \right.\\
      &\left. +
        \norma{v''}_{\L\infty(\R;\R) } \norma{\partial_x w}_{\L\infty(\R;\R)} \norma{w}_{\L1(\R;\R)}
        \min\left\{ \norma{\rho_o}_{\L1(\R;\R)},  \norma{\tilde \rho_o}_{\L1(\R;\R)}  \right\}
        \right)
      \\
      + \norma{f'}_{\L\infty(\Sigma_t;\R)}
      & \, \tv \left(\rho (t)\right) \,  \norma{v'}_{\L\infty(\R;\R)} \,  \norma{w}_{\L\infty(\R;\R)}.
    \end{align*}
    The main hypotheses there are the following:
    \begin{itemize}
    \item $f (t,x,\rho) = f (\rho)$;
    \item $w = \tilde w$, thus the kernel functions are the same;
    \item different initial data: $\rho_o \neq \tilde \rho_o$.
    \end{itemize}
    It is immediate to see that, once the estimate for the total
    variation of $\rho (t)$ is inserted, the bound $C_2$ bears a
    strong resemblance with our $b (t)$~\eqref{eq:b}, provided the
    $\L1$-norm of the kernel $w$ and of its derivative are controlled
    by $\norma{w}_{\W11(\R;\R)}$. }
\end{remark}

\begin{remark}
  {\rm One may wonder why there is the need to exploit the doubling of
    variables method and to go through all the steps of the proof
    instead of using the ready-made estimate provided
    in~\cite[Theorem~2.5 or Proposition~2.9]{MercierV2}.  The reason
    lies in the coefficient $\kappa^*$ appearing in the estimates
    presented in that work. Indeed, with our notation, this
    coefficient reads
    \begin{displaymath}
      \kappa ^* = \norma{\partial_\rho \partial_x \left(f (t,x,\rho) \left(V (t,x)- \tilde V (t,x)\right)\right)}_{\L\infty (\Sigma_t;\reali)}.
    \end{displaymath}
    Computing the derivatives yields
    \begin{displaymath}
      \kappa^* \leq
      \norma{\partial_x \partial_\rho f}_{\L\infty (\Sigma_t;\reali)} \norma{V -\tilde V}_{\L\infty ([0,t] \times \reali;\reali)}
      + \norma{\partial_\rho f}_{\L\infty (\Sigma_t;\reali)} \norma{\partial_x V - \partial_x \tilde V}_{\L\infty ([0,t] \times \reali;\reali)}.
    \end{displaymath}
    Substitute now the definitions of $V$ and $\tilde V$, using also
    the estimates for~\eqref{eq:4a} and~\eqref{eq:4b} computed in the
    proof of Theorem~\ref{thm:stabw}: we obtain an estimate for
    $\kappa^*$ depending on the term
    $\norma{\rho-\tilde \rho}_{\L\infty ([0,t];\L1(\reali;\reali))}$.
    Going back to the estimate presented in~\cite{MercierV2},
    we see that the coefficient $\kappa^*$ appears in the term
    $e^{\kappa^* \, t} \norma{\rho_o -\tilde \rho_o}_{\L1
      (\reali;\reali)}$. Therefore, since the final goal is to control
    from above
    $\norma{\rho (t) -\tilde \rho (t)}_{\L1 (\reali;\reali)}$, we get
    an implicit estimate for it, which is clearly not what desired. }
\end{remark}

\begin{proofof}{Theorem~\ref{thm:stabv}}
  We can apply Lemma~\ref{lem:dv} to problems~\eqref{eq:rho1}
  and~\eqref{eq:rhot1}.  Let us start from the
  inequality~[\eqref{eq:19}--\eqref{eq:20}].  Introduce the following
  notation, based on Lemma~\ref{lem:Linf}:
  \begin{align*}
    \norma{\rho (t)}_{\L\infty (\R;\R)} \leq \
    &\norma{\rho_o}_{\L\infty (\R;\R)} \, e^{\mathcal{L} \, t}
    &
      \norma{\tilde \rho (t)}_{\L\infty (\R;\R)} \leq \
    &\norma{\rho_o}_{\L\infty (\R;\R)} \, e^{\tilde {\mathcal{L}} \, t}.
  \end{align*}
  Define
  $\mathcal{G}_t = \norma{\rho_o}_{\L\infty (\R;\R)} \,
  e^{\max\{\mathcal{L}, \tilde {\mathcal{L}}\} \, t}$. Similarly
  to~\eqref{eq:sigma}, introduce the space
  \begin{displaymath}
    \Sigma_t = [0,t] \times \reali \times [0, \mathcal{G}_t].
  \end{displaymath}
  Let $\tau \to 0$ in~[\eqref{eq:19}--\eqref{eq:20}] and recall also
  the assumption
  $\sup_{t,x}\modulo{\partial_x f(t,x,\rho)} < C \modulo{\rho}$:
  \begin{align}
    \label{eq:17a}
    \norma{\rho(t)-\tilde \rho(t)}_{\L1(\R;\R)}
    \leq \
    &
      \int_0^t\norma{f}_{\L\infty (\Sigma_s; \R)}  \int_{\R}
      \modulo{\partial_x V (s,x) -\partial_x \tilde V (s,x)} \d{x} \d{s}
    \\
    \label{eq:17b}
    & +  \int_{0}^{t}  \int_{\R} C \modulo{\rho (t,x)} \,
      \modulo{\tilde V (s,x)  - V (s,x)} \d{x} \d{s}
    \\
    \label{eq:17c}
    & +\int_{0}^{t} \norma{\partial_\rho f}_{\L\infty (\Sigma_s; \R)}   \int_{\R}
      \modulo{\partial_x \rho (s,x)} \, \modulo{V (s,x) -\tilde V (s,x)} \d{x}\d{s}  .
  \end{align}
  By the definitions of $V$ and $\tilde V$, compute:
  \begin{align*}
    &\modulo{V (s,x) - \tilde V (s,x)}
    \\
    = \
    & \modulo{v ( (\rho (s) * w ) (x)) - \tilde v ( (\tilde \rho (s) * w ) (x))}
    \\
    \leq \
    & \min\left\{\norma{v'}_{\L\infty (\reali;\reali)}, \norma{\tilde v'}_{\L\infty (\reali;\reali)}\right\}
      \norma{w}_{\L\infty (\reali;\reali)} \norma{\rho (s) - \tilde \rho (s)}_{\L1 (\reali;\reali)}
      +
      \norma{v - \tilde v}_{\L\infty (\reali;\reali)}
  \end{align*}
  and
  \begin{align*}
    & \int_\reali \modulo{\partial_x V (s,x) -\partial_x \tilde V (s,x)} \d{x}
    \\
    =\
    & \int_{\R}
      \modulo{v'((\rho (s) * w) (x)) \, \left(\rho (s) * \partial_x w\right) (x) -
      \tilde v'((\tilde\rho (s) * w) (x)) \, \left(\tilde \rho (s) * \partial_x  w\right) (x)
      }\d{x}
    \\
    \leq \
    & \min\left\{\norma{v'}_{\L\infty (\reali;\reali)}, \norma{\tilde v'}_{\L\infty (\reali;\reali)}\right\}
      \norma{\partial_x w}_{\L1 (\reali;\reali)} \norma{\rho (s) - \tilde \rho (s)}_{\L1 (\reali;\reali)}
    \\
    & +
      \norma{v' - \tilde v'}_{\L\infty (\reali;\reali)} \norma{\partial_x w}_{\L1 (\R;\R)}
      \min\left\{\norma{\rho (s)}_{\L1 (\reali;\reali)}, \norma{\tilde \rho (s)}_{\L1 (\reali;\reali)}\right\}
    \\
    \leq \
    &  \min\left\{\norma{v'}_{\L\infty (\reali;\reali)}, \norma{\tilde v'}_{\L\infty (\reali;\reali)}\right\}
      \norma{\partial_x w}_{\L1 (\reali;\reali)} \norma{\rho (s) - \tilde \rho (s)}_{\L1 (\reali;\reali)}
    \\
    & +
      \norma{v' - \tilde v'}_{\L\infty (\reali;\reali)} \norma{\partial_x w}_{\L1 (\R;\R)}
      \norma{\rho_o}_{\L1 (\reali;\reali)},
  \end{align*}
  where we exploit also Lemma~\ref{lem:L1}.  Therefore the
  inequality~[\eqref{eq:17a}--~\eqref{eq:17c}] can be estimated as
  follows:
  \begin{align*}
    &\norma{\rho (t) - \tilde \rho (t)}_{\L1 (\reali;\reali)}
    \\ \leq \
    &  c_1 (t) \, \norma{v - \tilde v}_{\L\infty (\reali;\reali)}
      +
      c_2 (t) \, \norma{v' - \tilde v'}_{\L\infty (\reali;\reali)}
      +
      \int_0^t c_3 (s) \, \norma{\rho (s) - \tilde \rho (s)}_{\L1 (\reali;\reali)} \d{s},
  \end{align*}
  where, thanks to the total variation estimate provided by
  Proposition~\ref{prop:tv},
  \begin{align}
    \label{eq:c1}
    c_1 (t) = \
    & t \, \left(C \, \norma{\rho_o}_{\L1 (\reali;\reali)}
      + (\mathcal{K}_2 \, t + \tv (\rho_o)) \, e^{\mathcal{K}_1 \, t} \, \norma{\partial_\rho f}_{\L\infty (\Sigma_t;\reali)} \right),
    \\
    \label{eq:c2}
    c_2 (t) = \
    & t \, \norma{f}_{\L\infty (\Sigma_t;\reali)} \, \norma{\partial_x w}_{\L1 (\reali;\reali)} \, \norma{\rho_o}_{\L1 (\reali;\reali)},
    \\
    \nonumber
    c_3 (s) = \
    & \norma{f}_{\L\infty (\Sigma_s;\reali)}
      \min\left\{\norma{v'}_{\L\infty (\reali;\reali)}, \norma{\tilde v'}_{\L\infty (\reali;\reali)}\right\}
      \norma{\partial_x w}_{\L1 (\reali;\reali)}
    \\
    \label{eq:c3}
    & +
      \left(C \, \norma{\rho_o}_{\L1 (\reali;\reali)}
      + (\mathcal{K}_2 \, s + \tv (\rho_o)) \, e^{\mathcal{K}_1 \, s} \, \norma{\partial_\rho f}_{\L\infty (\Sigma_s;\reali)} \right)
    \\
    \nonumber
    & \quad\times \min\left\{\norma{v'}_{\L\infty (\reali;\reali)}, \norma{\tilde v'}_{\L\infty (\reali;\reali)}\right\}
      \norma{w}_{\L\infty (\reali;\reali)},
  \end{align}
  $\mathcal{K}_1$ and $\mathcal{K}_2$ being specified
  in~\eqref{eq:2}. An application of Gronwall Lemma yields
  \begin{align*}
    \norma{\rho (t) - \tilde \rho (t)}_{\L1 (\reali;\reali)} \leq \
    & \left(
      c_1 (t) \, \norma{v - \tilde v}_{\L\infty (\reali;\reali)}
      +
      c_2 (t) \, \norma{v' - \tilde v'}_{\L\infty (\reali;\reali)}
      \right) \, \exp\left(\int_0^t c_3 (s) \d{s} \right),
  \end{align*}
  concluding the proof.
\end{proofof}

\section{Numerical Integrations}
\label{sec:numint}
In this section, we investigate the dependence of solutions to
\eqref{eq:1} on the kernel and the velocity function via numerical
integrations.  To this end, we discretize~\eqref{eq:1} on a fixed grid
given by the cells interfaces $x_{j+\frac{1}{2}}=j \Delta x$ and the cells
centres $x_j=(j-\frac{1}{2})\Delta x$ for $j\in \interi,$ taking a
space step $\Delta x$ and a time step $\Delta t$, so that
$t^n=n\Delta t$ is the time mesh.
The Lax-Friedrichs flux adapted to~\eqref{eq:1} is given by
\begin{equation} \label{eq:LFflux} F ^n_{j+1/2}:= \frac12\left( f
    (t^n, x_{j},\rho^n_j) v (R^n_j) + f (t^n, x_{j+1},\rho^n_{j+1}) v
    (R^n_{j+1}) \right) -\frac{\alpha}{2} (\rho^n_{j+1}-\rho^n_j)
\end{equation}
where $\alpha\geq0$ is the viscosity coefficient and
$R^n_j := \Delta x \displaystyle{\sum_{k \in \Z}}\rho^n_{j+k}
w^k_\eta$, denoting $w^k_\eta:= w_\eta(k \Delta x)$ for $k\in \Z$.  In
this way we have the finite volume scheme
\begin{equation}
  \label{eq:18}
  \rho^{n+1}_j =
  \rho^n_j -   \lambda
  \left[
    F ^n_{j+1/2}
    -
    F ^n_{j-1/2}
  \right],
\end{equation}
with $\lambda= \Delta t/ \Delta x.$ A rigorous study of the
convergence of Lax-Friedrichs type schemes for non-local conservation
laws has been carried out in \cite{ACG2015, AmorimColomboTexeira,
  BlandinGoatin2016}.  Here we limit the study to the derivation of
sufficient conditions ensuring that the above discretization~\eqref{eq:LFflux}--\eqref{eq:18} is positivity preserving.

\begin{lemma} \label{lem:CFL} For any $T>0$, under the CFL conditions
  \begin{equation}
    \label{eq:CFL}
    \lambda
    \left( \alpha +
      \left(C \, \Delta x + 2 \, \norma{\partial_\rho f}_{\L\infty(\Sigma_T;\R)}\right)\norma{v}_{\L\infty(\R;\R)}\right) < 1,
  \end{equation}
  \begin{equation}
    \label{eq:alpha}
    \alpha \geq \norma{\partial_\rho f}_{\L\infty(\Sigma_T;\R)}\norma{v}_{\L\infty(\R;\R)},
  \end{equation}
  the scheme~\eqref{eq:LFflux}--\eqref{eq:18} is positivity preserving
  on $[0,T]\times\R$.
\end{lemma}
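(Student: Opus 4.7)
I would prove the result by induction on $n$: assuming $\rho^n_j\ge 0$ for every $j\in\Z$, I will derive $\rho^{n+1}_j\ge 0$; the base case $n=0$ is the hypothesis on $\rho_o$. The starting point is to substitute~\eqref{eq:LFflux} into~\eqref{eq:18} and exploit a convenient cancellation: the ``central'' contribution $\frac{1}{2}f(t^n,x_j,\rho^n_j)\,v(R^n_j)$ appears in both $F^n_{j+1/2}$ and $F^n_{j-1/2}$ and drops out in the difference, leaving
\begin{displaymath}
  \rho^{n+1}_j = (1-\lambda\alpha)\rho^n_j + \frac{\lambda\alpha}{2}\rho^n_{j+1} + \frac{\lambda\alpha}{2}\rho^n_{j-1} - \frac{\lambda}{2}f(t^n,x_{j+1},\rho^n_{j+1})\,v(R^n_{j+1}) + \frac{\lambda}{2}f(t^n,x_{j-1},\rho^n_{j-1})\,v(R^n_{j-1}).
\end{displaymath}

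Next I would use the standing assumption $f(t,x,0)=0$ from~\eqref{eq:hpf} to factor the (nonnegative) values $\rho^n_{j\pm 1}$ out of the two remaining flux terms. Setting
\begin{displaymath}
  h(t,x,\rho) := \int_0^1 \partial_\rho f(t,x,s\rho)\,\d{s}, \qquad \modulo{h(t,x,\rho)} \le \norma{\partial_\rho f}_{\L\infty(\Sigma_T;\R)},
\end{displaymath}
so that $f(t^n,x_k,\rho^n_k) = \rho^n_k\,h(t^n,x_k,\rho^n_k)$, the update rewrites as
\begin{align*}
  \rho^{n+1}_j
  = (1-\lambda\alpha)\,\rho^n_j
  & + \frac{\lambda}{2}\bigl[\alpha - h(t^n,x_{j+1},\rho^n_{j+1})\,v(R^n_{j+1})\bigr]\,\rho^n_{j+1} \\
  & + \frac{\lambda}{2}\bigl[\alpha + h(t^n,x_{j-1},\rho^n_{j-1})\,v(R^n_{j-1})\bigr]\,\rho^n_{j-1}.
\end{align*}

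It then remains to verify nonnegativity of each of the three coefficients. The first one, $1-\lambda\alpha$, is $\ge 0$ because~\eqref{eq:CFL} is strictly stronger than $\lambda\alpha<1$. The two bracketed factors are bounded below by $\alpha - \norma{\partial_\rho f}_{\L\infty(\Sigma_T;\R)}\norma{v}_{\L\infty(\R;\R)}$, which is $\ge 0$ by~\eqref{eq:alpha}. Together with the inductive hypothesis $\rho^n_{j-1},\rho^n_j,\rho^n_{j+1}\ge 0$, this gives $\rho^{n+1}_j\ge 0$, closing the induction for all $n$ with $n\dt\le T$. I do not foresee any real obstacle: the non-locality enters only through the uniformly bounded factors $v(R^n_k)$, so it requires no extra care compared to the classical local Lax--Friedrichs analysis; the decisive step is simply the cancellation of the central flux term that makes the clean three-point decomposition above possible. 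I would only remark in passing that the stronger form of~\eqref{eq:CFL} is presumably tuned to other estimates (e.g.\ $\L\infty$ or total-variation bounds for the scheme); for positivity alone, $\lambda\alpha\le 1$ together with~\eqref{eq:alpha} already suffices.
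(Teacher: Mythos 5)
Your proof is correct, but it follows a genuinely different route from the paper's. The paper also starts from the cancellation of the central flux term, but then splits the remaining flux difference \emph{à la} Harten into incremental pieces
$\left(f_{j+1}(\rho_{j+1})-f_{j+1}(\rho_j)\right)v_{j+1}$,
$\left(f_{j-1}(\rho_j)-f_{j-1}(\rho_{j-1})\right)v_{j-1}$,
$\left(f_{j+1}(\rho_j)-f_{j-1}(\rho_j)\right)v_{j+1}$ and
$f_{j-1}(\rho_j)\left(v_{j+1}-v_{j-1}\right)$: the first two divided differences are absorbed into the coefficients of $\rho_{j\pm1}$ and $\rho_j$ (this is where~\eqref{eq:alpha} enters), while the last two leftover terms are bounded by
$2C\norma{v}_{\L\infty(\R;\R)}\Delta x\,\rho_j$ (using $\modulo{\partial_x f}\leq C\modulo{\rho}$) and
$2\norma{\partial_\rho f}_{\L\infty(\Sigma_T;\R)}\norma{v}_{\L\infty(\R;\R)}\rho_j$ (using $f(t,x,0)=0$), and must then be dominated by the coefficient of $\rho_j$ --- which is exactly what forces the extra terms $C\Delta x+2\norma{\partial_\rho f}_{\L\infty(\Sigma_T;\R)}$ inside the CFL condition~\eqref{eq:CFL}. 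You instead factor $f(t^n,x_{j\pm1},\rho_{j\pm1})=\rho_{j\pm1}\,h(t^n,x_{j\pm1},\rho_{j\pm1})$ directly at the outer points, obtaining a clean three-point combination with no leftover terms; this buys a shorter argument and, as you correctly observe, shows that $\lambda\alpha\leq1$ together with~\eqref{eq:alpha} already suffices for positivity, so the full condition~\eqref{eq:CFL} is not needed for this particular conclusion (it is the natural monotonicity/stability CFL one would want for the further $\L\infty$ and BV estimates). One small point present in both arguments: bounding $h$ (respectively the paper's $\partial_\rho f_{j\pm1}(\zeta_{j\pm1/2})$) by $\norma{\partial_\rho f}_{\L\infty(\Sigma_T;\R)}$ tacitly uses that the discrete values stay in $[0,M_T]$; this is harmless here since~\eqref{eq:hpf} gives a global bound on $\partial_\rho f$, but it is worth stating.
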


\begin{proof}
  Let us assume that $\rho_j^n \geq 0$ for all $j \in \interi$.  It
  suffices to prove that $\rho^{n+1}_j$ in~\eqref{eq:18} is
  non-negative.  For the sake of simplicity, in the following we omit
  the dependence on $n$ and introduce the notation
  $f_i (\rho_j) = f(t^n, x_i, \rho_j)$ and $v_j = v (R^n_j)$.  Compute
  \begin{align*}
    \rho^{n+1}_j = \
    & \rho_j + \frac{\lambda \, \alpha}{2} (\rho_{j+1} - 2 \, \rho_j + \rho_{j-1})
      - \frac{\lambda}{2} \left[f_{j+1} (\rho_{j+1}) \, v_{j+1} - f_{j-1} (\rho_{j-1}) \, v_{j-1}\right]
    \\
    = \
    & \rho_j (1- \lambda \, \alpha) + \frac{\lambda \, \alpha}{2} (\rho_{j+1} + \rho_{j-1})
    \\
    & -\frac{\lambda}{2}\left[
      \left(f_{j+1} (\rho_{j+1}) - f_{j+1} (\rho_j) \right) v_{j+1}
      +
      \left(f_{j-1} (\rho_{j}) - f_{j-1} (\rho_{j-1}) \right) v_{j-1}\right.
    \\
    & \qquad\,\, \left.
      +
      \left(f_{j+1} (\rho_{j}) - f_{j-1} (\rho_j) \right) v_{j+1}
      +
      f_{j-1} (\rho_j)\left(v_{j+1} -v_{j-1}\right)
      \right]
    \\
    = \
    & \rho_j\left(
      1  - \lambda \, \alpha
      + \frac{\lambda}{2} \,
      \frac{f_{j+1} (\rho_{j+1}) - f_{j+1} (\rho_j)}{\rho_{j+1}- \rho_j} \, v_{j+1}
      -  \frac{\lambda}{2} \,
      \frac{f_{j-1} (\rho_{j}) - f_{j-1} (\rho_{j-1})}{\rho_j - \rho_{j-1}} \, v_{j-1}
      \right)
    \\
    & + \rho_{j+1}\left(
      \frac{\lambda \, \alpha}{2}
      - \frac{\lambda}{2} \,
      \frac{f_{j+1} (\rho_{j+1}) - f_{j+1} (\rho_j)}{\rho_{j+1}- \rho_j} \, v_{j+1}
      \right)
    \\
    & + \rho_{j-1}\left(
      \frac{\lambda \, \alpha}{2}
      + \frac{\lambda}{2} \,
      \frac{f_{j-1} (\rho_{j}) - f_{j-1} (\rho_{j-1})}{\rho_j - \rho_{j-1}} \,v_{j-1}
      \right)
    \\
    & -\frac{\lambda}{2} \, v_{j+1} \left(f_{j+1} (\rho_{j}) - f_{j-1} (\rho_j) \right)
      - \frac{\lambda}{2}  \, f_{j-1} (\rho_j) \left(v_{j+1} -v_{j-1}\right).
  \end{align*}
  Observe that, thanks to the assumption~\eqref{eq:alpha} on $\alpha$,
  \begin{align*}
    \alpha +  \frac{f_{j-1} (\rho_{j}) - f_{j-1} (\rho_{j-1})}{\rho_j - \rho_{j-1}} \, v_{j-1} \! = \
    & \!\alpha + \partial_\rho f_{j-1} (\zeta_{j-1/2}) \, v_{j-1}
      \!\geq\! \alpha - \!\norma{\partial_\rho f}_{\L\infty(\Sigma_T;\R)} \norma{v}_{\L\infty(\R;\R)} \!\geq\! 0,
    \\
    \alpha -  \frac{f_{j+1} (\rho_{j+1}) - f_{j+1} (\rho_j)}{\rho_{j+1}- \rho_j} \, v_{j+1} \! = \
    & \!\alpha - \partial_\rho f_{j+1} (\zeta_{j+1/2}) \, v_{j+1}
      \!\geq\! \alpha - \!\norma{\partial_\rho f}_{\L\infty(\Sigma_T;\R)} \norma{v}_{\L\infty(\R;\R)} \!\geq\! 0.
  \end{align*}
  Moreover,
  \begin{displaymath}
    v_{j+1} \left(f_{j+1} (\rho_{j}) - f_{j-1} (\rho_j) \right)
    \leq 2 \, C \, \norma{v}_{\L\infty(\R;\R)} \, \Delta x \, \rho_j
  \end{displaymath}
  and
  \begin{displaymath}
    f_{j-1} (\rho_j) \left(v_{j+1} -v_{j-1}\right)
    \leq 2 \, \norma{\partial_\rho f}_{\L\infty(\Sigma_T;\R)} \norma{v}_{\L\infty(\R;\R)}\rho_j.
  \end{displaymath}
  Hence,
  \begin{align*}
    \rho_j  & \left(
              1  - \lambda \, \alpha
              + \frac{\lambda}{2} \,
              \frac{f_{j+1} (\rho_{j+1}) - f_{j+1} (\rho_j)}{\rho_{j+1}- \rho_j} \, v_{j+1}
              -  \frac{\lambda}{2} \,
              \frac{f_{j-1} (\rho_{j}) - f_{j-1} (\rho_{j-1})}{\rho_j - \rho_{j-1}} \, v_{j-1}
              \right)
    \\
            & -\frac{\lambda}{2} \, v_{j+1} \left(f_{j+1} (\rho_{j}) - f_{j-1} (\rho_j) \right)
              - \frac{\lambda}{2}  \, f_{j-1} (\rho_j) \left(v_{j+1} -v_{j-1}\right)
    \\
    \geq \
            & \rho_j \left(
              1  - \lambda \, \alpha
              -2 \, \lambda \, \norma{\partial_\rho f}_{\L\infty(\Sigma_T;\R)} \norma{v}_{\L\infty(\R;\R)}
              -\lambda \,  C   \, \norma{v}_{\L\infty(\R;\R)} \, \Delta x
              \right) \geq 0,
  \end{align*}
  by the CFL condition~\eqref{eq:CFL}.
\end{proof}

\medskip
Fix $T=0.5$. Let us now consider the following problem:
\begin{equation} \label{eq:sample}
  \begin{cases}
    \partial_t \rho+\partial_x(f(t,x,\rho)v(w_{\eta,\delta}\ast\rho))=0, & t\in[0,T],~x\in \ ]-1,1[ \ , \\
    \rho(0,x) = 0.6,
  \end{cases}
\end{equation}
with periodic boundary conditions at $x=\pm 1$ and
\begin{align}
  \label{eq:f}
  &f(t,x,\rho)=V_{\max}(t,x) \rho (1-\rho), & \rho\in [0,1],\\
  \label{eq:vel}
  &v(\rho)=(1-\rho)^{m-1} (1+\rho)^m, & m\in\N,\\
  \label{eq:kernel}
  & w_{\eta,\delta} (x)=\frac{1}{\eta^{6}}\frac{16}{5 \pi}(\eta^2-(x-\delta)^2)^\frac{5}{2} \chi_{[-\eta+\delta,\eta+\delta]},  & \eta\in \ ]0,1],~\delta\in [-\eta,\eta].
\end{align}
In~\eqref{eq:f}, $V_{\max}(t,x)$ is given by the convolution between
the gaussian kernel
$g(x)=\frac{1}{\sigma \sqrt{2
    \pi}}e^{-\frac{1}{2}(\frac{x}{\sigma})^2}$ with $\sigma=10$ and
the following piece-wise constant function:
\begin{equation*}
  \phi(t,x)=\begin{cases}
    7 & \text{ if } x\in \ ]-1, -1/3] \ \cup \ ]1/3,  1],\\
    3 & \text{ if } x\in \ ]-1/3,1/3],~t\in [0, 1/6] \ \cup \ ]1/3,1/2],\\
    1.5 & \text{ if } x\in \ ]-1/3,1/3],~t\in ]1/6, 1/3] ,
  \end{cases}
\end{equation*}
see Figure~\ref{fig:vmax}. In~\eqref{eq:kernel}, the parameter $\eta$
represents the radius of the support of the kernel function
$w_{\eta,\delta}$, while $\delta$ is the point at which the maximum is
attained.
\begin{figure}[!h]
  \centering
  \includegraphics[width=0.55\textwidth, trim=15 5 35 20, clip=true]{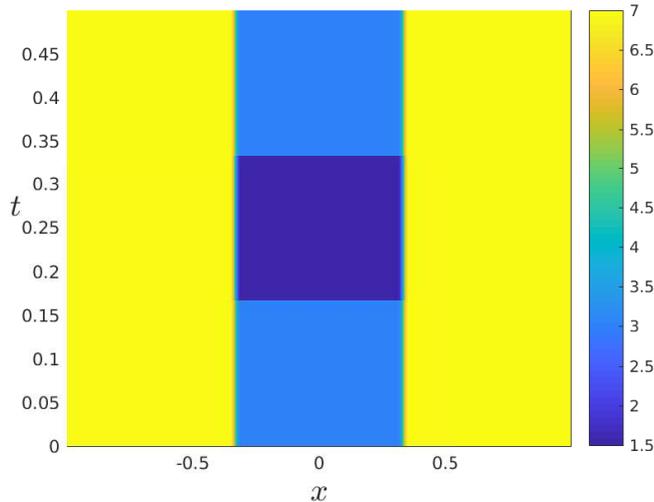}
  \caption{2D plot of the function $V_{max}(t,x)$}
  \label{fig:vmax}
\end{figure}

The above equations~\eqref{eq:sample}--\eqref{eq:kernel} describe the
traffic flow on a circular road with variable speed limit in space and
time, starting from a constant initial density $\rho_o \equiv 0.6$
(for simplicity, the maximal density is here normalised to 1).

As a metric of traffic congestion, we consider the two following
functionals~\cite{ColomboGoatinRosini2011, ColomboGroli2004,
  ColomboRossi2017}:
\begin{align}
  \label{J}
  J(T)&=\int_{0}^{T} \d{} \modulo{\del_x \rho(t,\cdot) } \d t ,
  \\
  \label{functPsi}
  \Psi(T;a,b)&=\int_{0}^{T}\int_{a}^{b}\phi(\rho(t,x)) \,\d x\,\d t,
\end{align}
where
\begin{equation*}
  \phi(r)=\begin{cases}
    0  &r <0.75,\\
    10\, {r}-7.5  &0.75 \leq r \leq 0.85,\\
    1 &0.85 <r\leq 1.
  \end{cases}
\end{equation*}
The functional $J$ defined in~\eqref{J} measures the integral with
respect to time of the spatial total variation of the traffic
density. The results of Theorems~\ref{thm:stabw} and~\ref{thm:stabv}
apply to the present setting and ensure the continuous dependence of
$J$ on the parameters $m$, $\eta$ and $\delta$. Indeed, the map
$\delta \to w_{\eta,\delta}$ is Lipschitz continuous with respect to
the $\W11$ distance, the map $\eta \to w_{\eta,\delta}$ is continuous
with respect to the $\W11$ distance and the map $m \to v$ is
continuous with respect to the $\W1\infty$
distance. Theorem~\ref{thm:stabw} then ensures that the map
$w_{\eta,\delta} \to \rho$, where $\rho$
solves~\eqref{eq:sample}--\eqref{eq:kernel}, is continuous with
respect to the $\W11$ distance, while Theorem~\ref{thm:stabv} ensures
the continuity of the map $v \to \rho$. Finally, the map $\rho \to J$
is lower semicontinuous, as showed in~\cite[Lemma
2.1]{ColomboGroli2004}.  Therefore, any minimising sequence of solutions
converges, guaranteeing the existence of optimal choices of the parameters
$\eta$, $\delta$ and $m$.

The functional $\Psi$ in~\eqref{functPsi} was introduced in
\cite{ColomboRossi2017} and it is obviously continuous with respect to
$\rho$ in the $\L1$-distance.  It measures the queue of the solution
in the space interval $[a,b]$, which is chosen equal to $[-4/5,-1/3]$
in the numerical simulations below.

For the tests, we fix the space discretization mesh to
$\Delta x=0.001.$ Figures~\ref{fig:eta}--\ref{fig:delta} show the
values of the functionals $J$ and $\psi$ when we vary the value of one
of the parameters $\eta$, $\delta$ and $m$, keeping the other
fixed. In particular, the functionals are evaluated on the following
grids:
\[
  \eta = 0.1 \colon 0.1 \colon 1, \qquad \delta = -0.1 \colon 0.02
  \colon 0.1,\qquad m = 1 \colon 1 \colon 10.
\]
We observe that the functionals are in general not monotone and
display some extrema in the considered intervals.
Figures~\ref{fig:etaEx}, \ref{fig:deltaEx} and~\ref{fig:mEx} show the
behaviour of the solutions corresponding to some of these extremal
values.  More precisely, Figures~\ref{fig:eta02m3}, \ref{fig:eta05m3}
and \ref{fig:eta1} show the solutions corresponding to
$\eta=0.2,~0.5,~1$ for $m=3$ and centered kernel ($\delta=0$). In
particular, the solutions displayed in~\ref{fig:eta02m3}
and~\ref{fig:eta1} correspond to the minimum and maximum values of the
functional $J$~\eqref{J} for $\eta\in [0.1,1]$ (see
Figure~\ref{fig:eta}, left).  Figure~\ref{fig:delta04} shows the
solution obtained for $\delta=-0.04$ (and $m=3$, $\eta=0.1$) and
corresponding to the point of minimum of both $J$ and $\Psi$
functionals, while Figures~\ref{fig:delta06} and~\ref{fig:delta10}
correspond to the points of maximum of the functionals $J$ and $\Psi$,
respectively (see Figure~\ref{fig:delta}).  Finally, in
Figures~\ref{fig:eta01m3} and~\ref{fig:eta01m10} we give the solutions
corresponding to the maximum and minimum points of the functional $J$
for $m\in \{1,\ldots, 10\}$ for $\eta=0.1$ and $\delta=0$ (see
Figure~\ref{fig:m}).

\begin{figure}[!h]
  \centering
  \includegraphics[width=0.5\textwidth, trim = 15 5 35 20, clip=true]{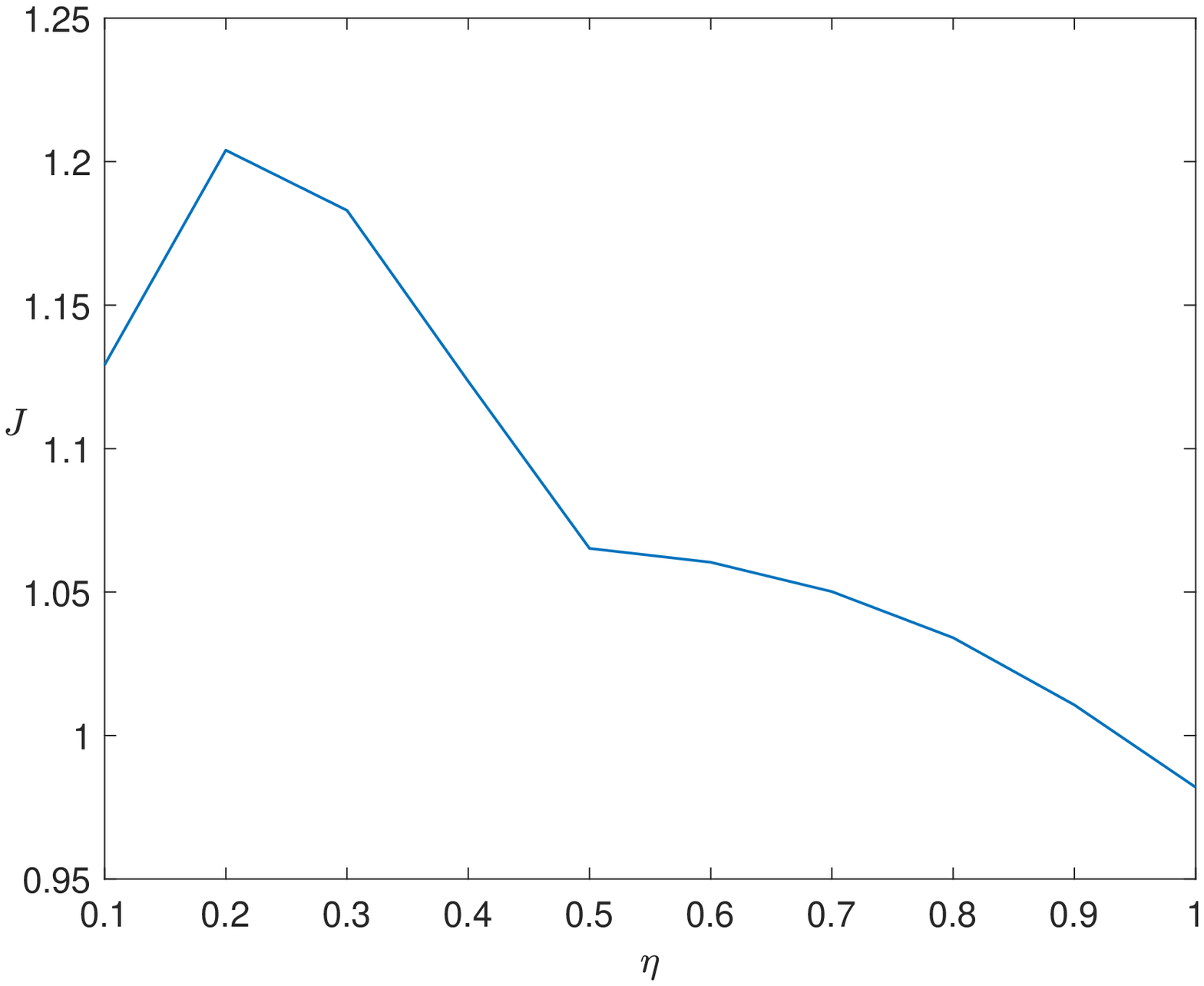}%
  \includegraphics[width=0.5\textwidth, trim = 15 5 35 20, clip=true]{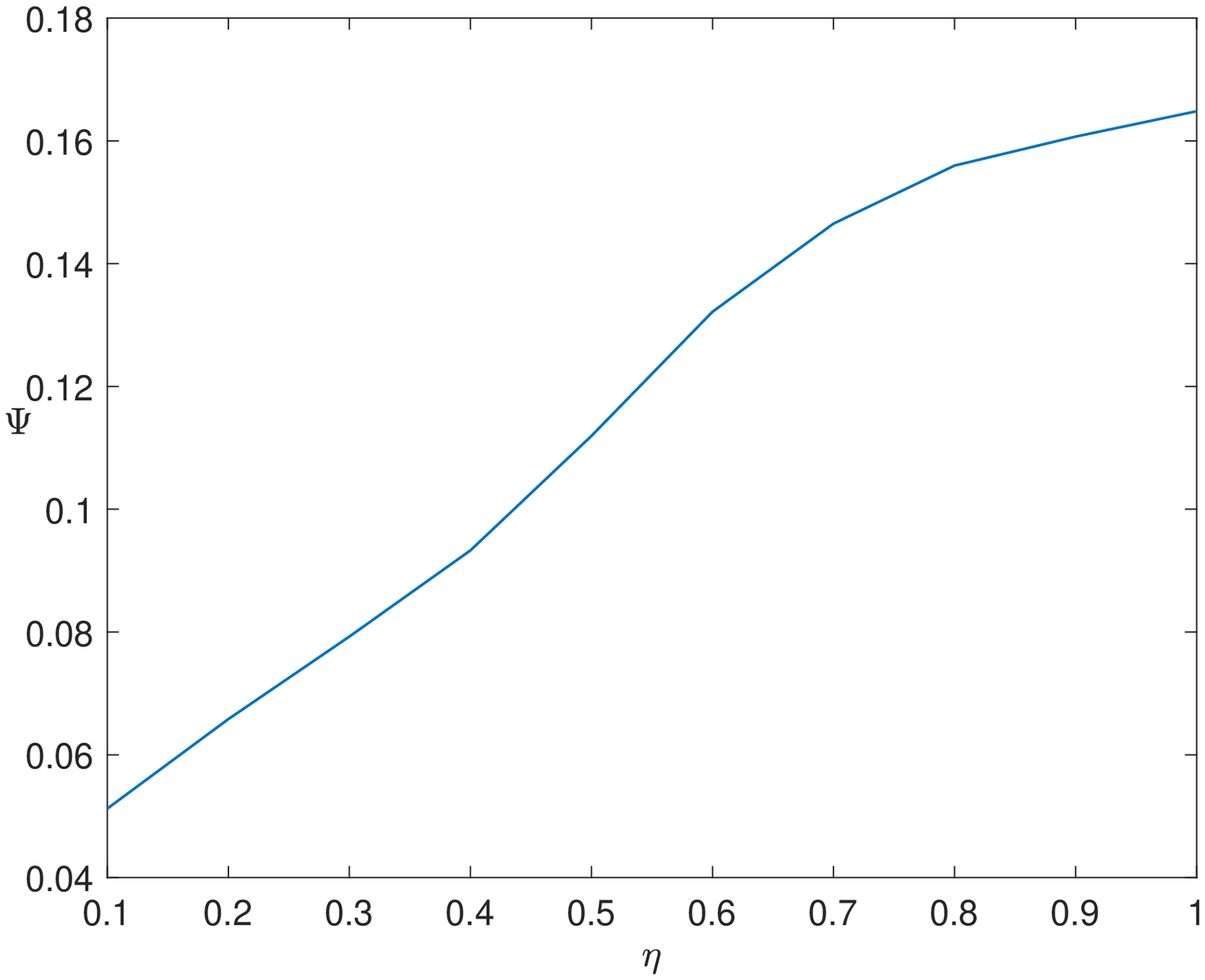}
  \caption{Functionals J~\eqref{J} (left) and $\Psi$~\eqref{functPsi} (right)
    with $m=3$, $\delta=0$ and $\eta\in [0.1, 1]$.}
  \label{fig:eta}
\end{figure}
\begin{figure}[!h]
  \centering
  \includegraphics[width=0.5\textwidth, trim = 15 5 35 20, clip=true]{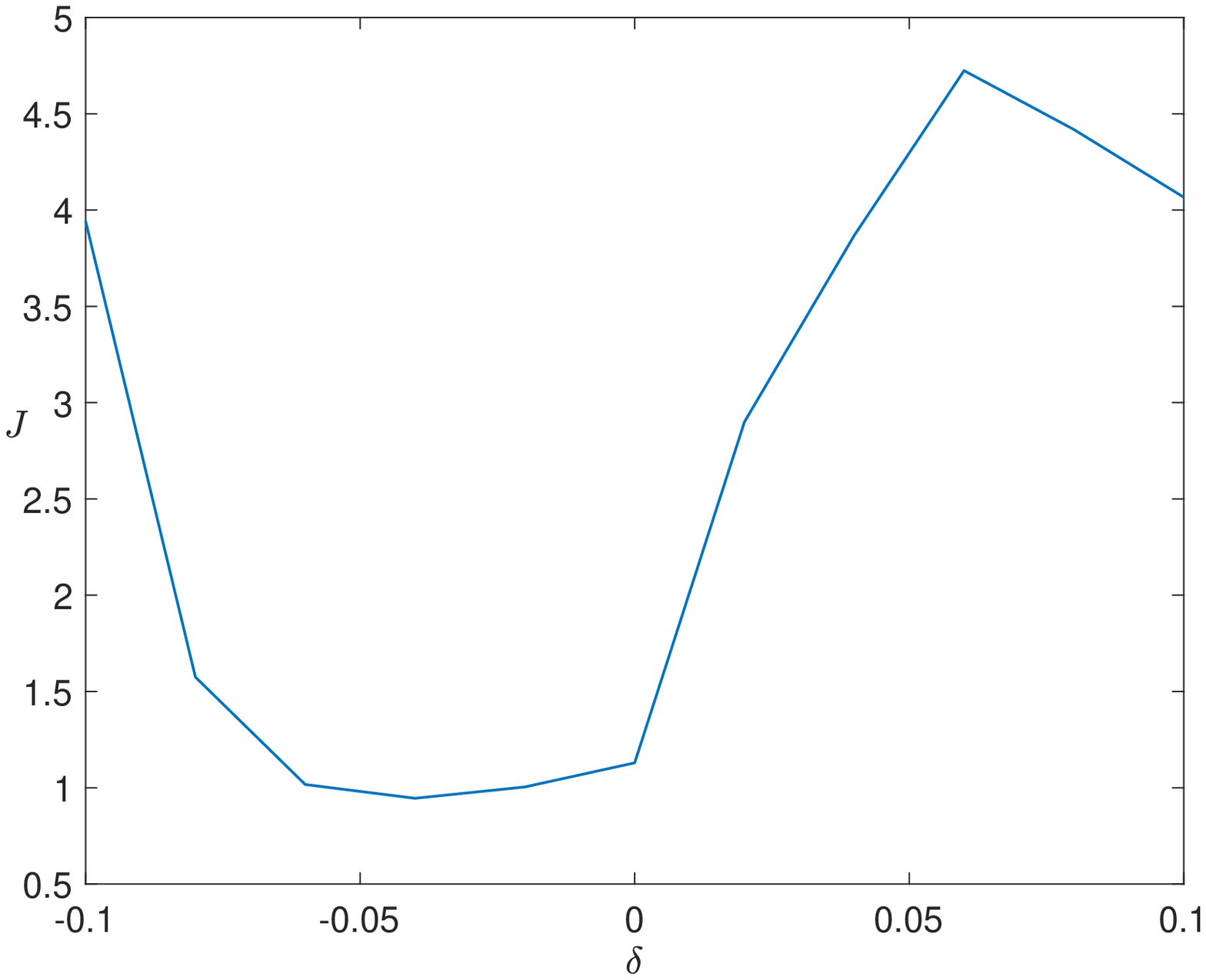}%
  \includegraphics[width=0.5\textwidth, trim = 15 5 35 20, clip=true]{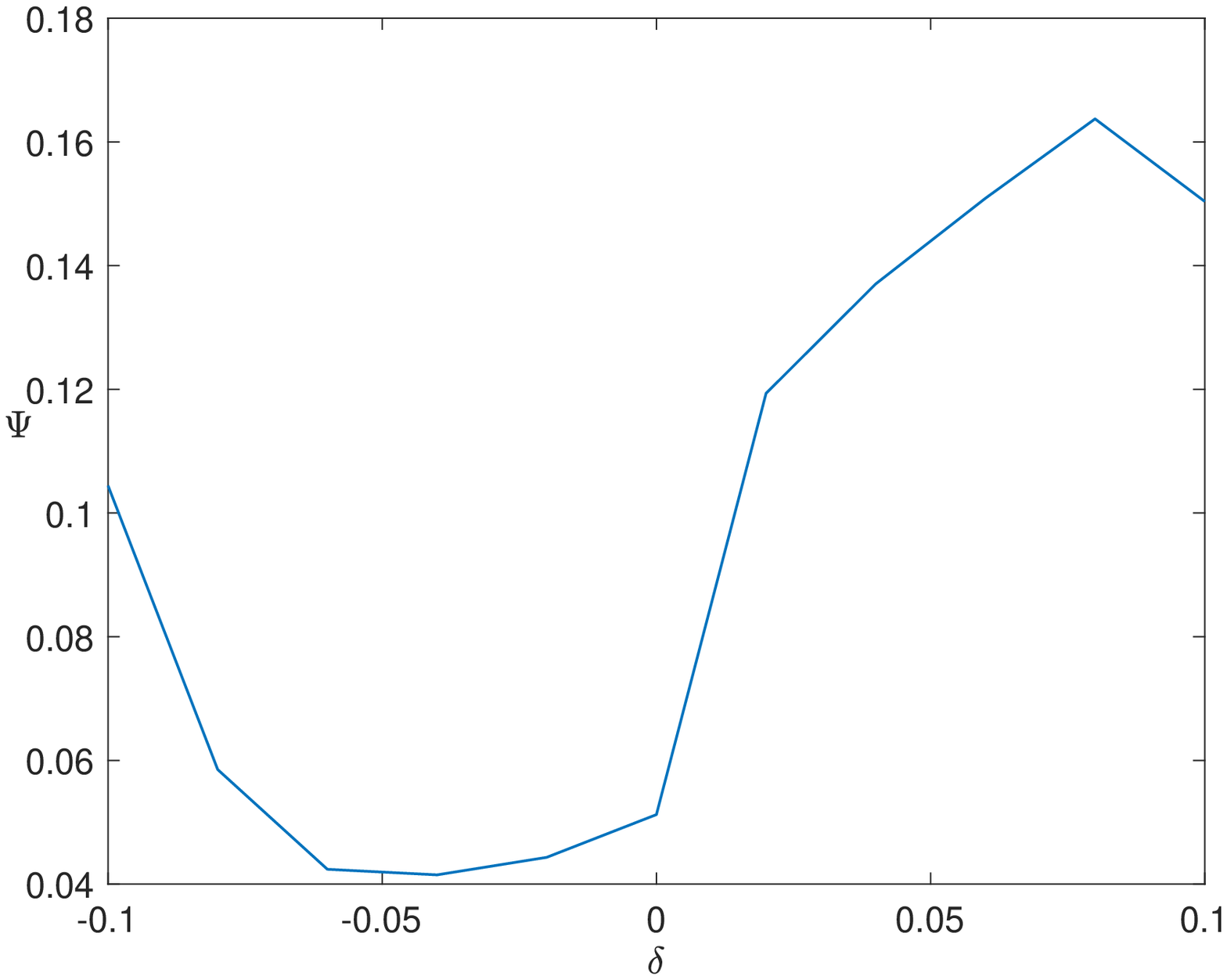}
  \caption{Functionals $J$~\eqref{J} (left) and
    $\Psi$~\eqref{functPsi} (right) with $\eta=0.1$, $m=3$ and
    $\delta\in [-\eta, \eta]$.}
  \label{fig:delta}
\end{figure}
\begin{figure}[!h]
  \centering
  \includegraphics[width=0.5\textwidth, trim = 15 5 35 20, clip=true]{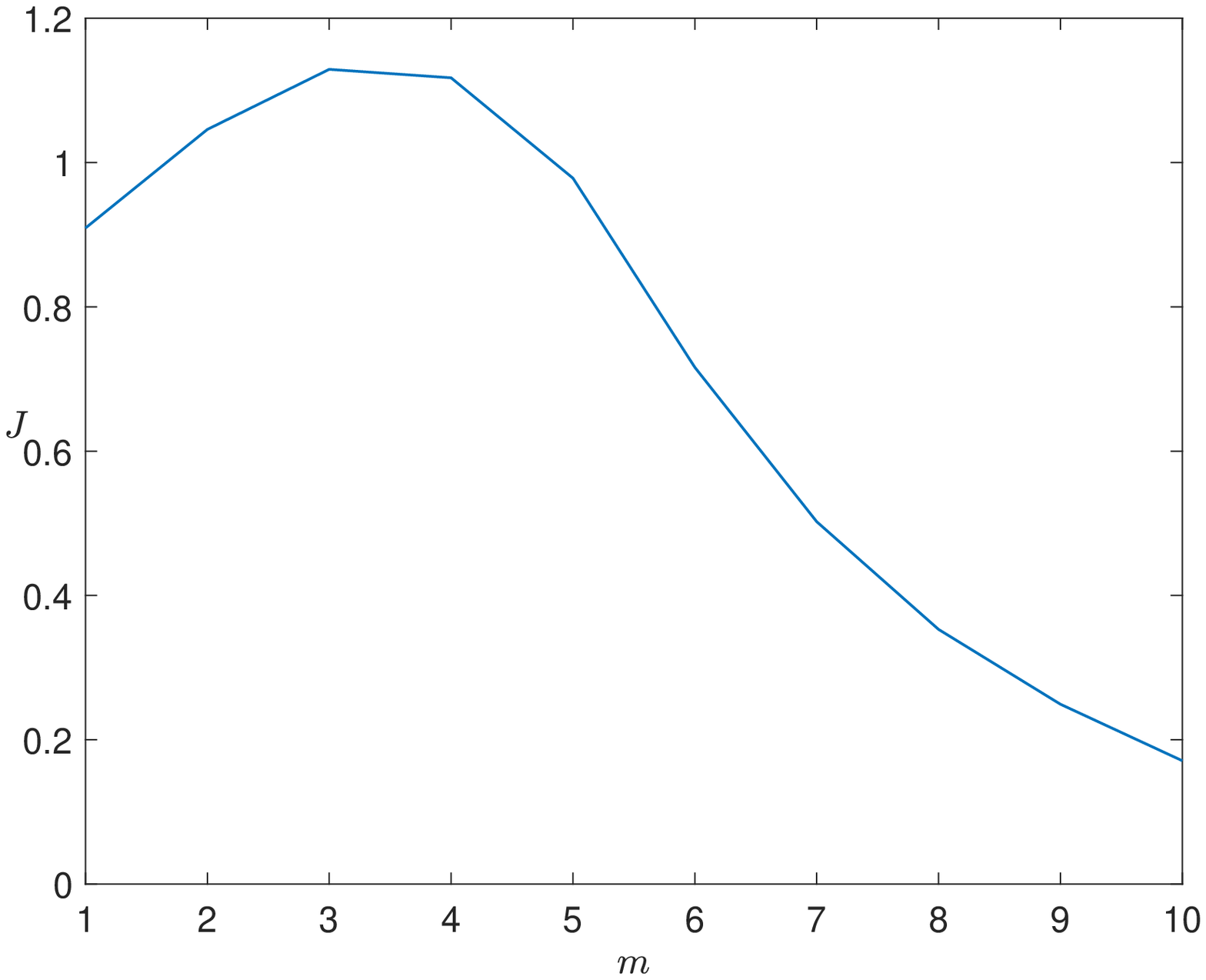}%
  \includegraphics[width=0.5\textwidth, trim = 15 5 35 20, clip=true]{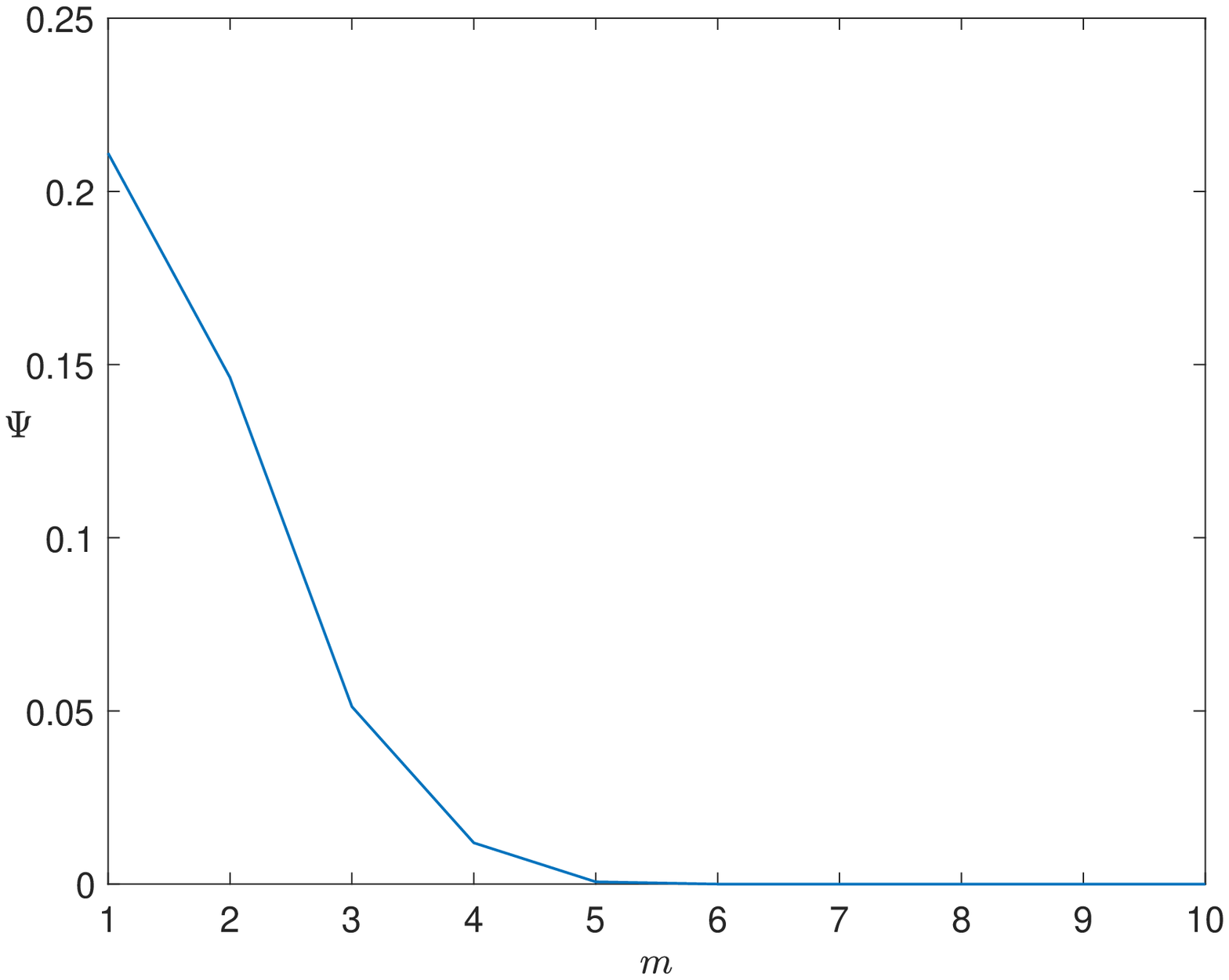}
  \caption{Functionals $J$~\eqref{J} (left) and
    $\Psi$~\eqref{functPsi} (right) with $\eta=0.1$, $\delta=0$ and
    $m\in [1, 10]$.}
  \label{fig:m}
\end{figure}

\begin{figure}[!h]
  \centering
   \begin{subfigure}[t]{0.3\textwidth}
    \includegraphics[width=\textwidth, trim = 15 5 35 20, clip=true]{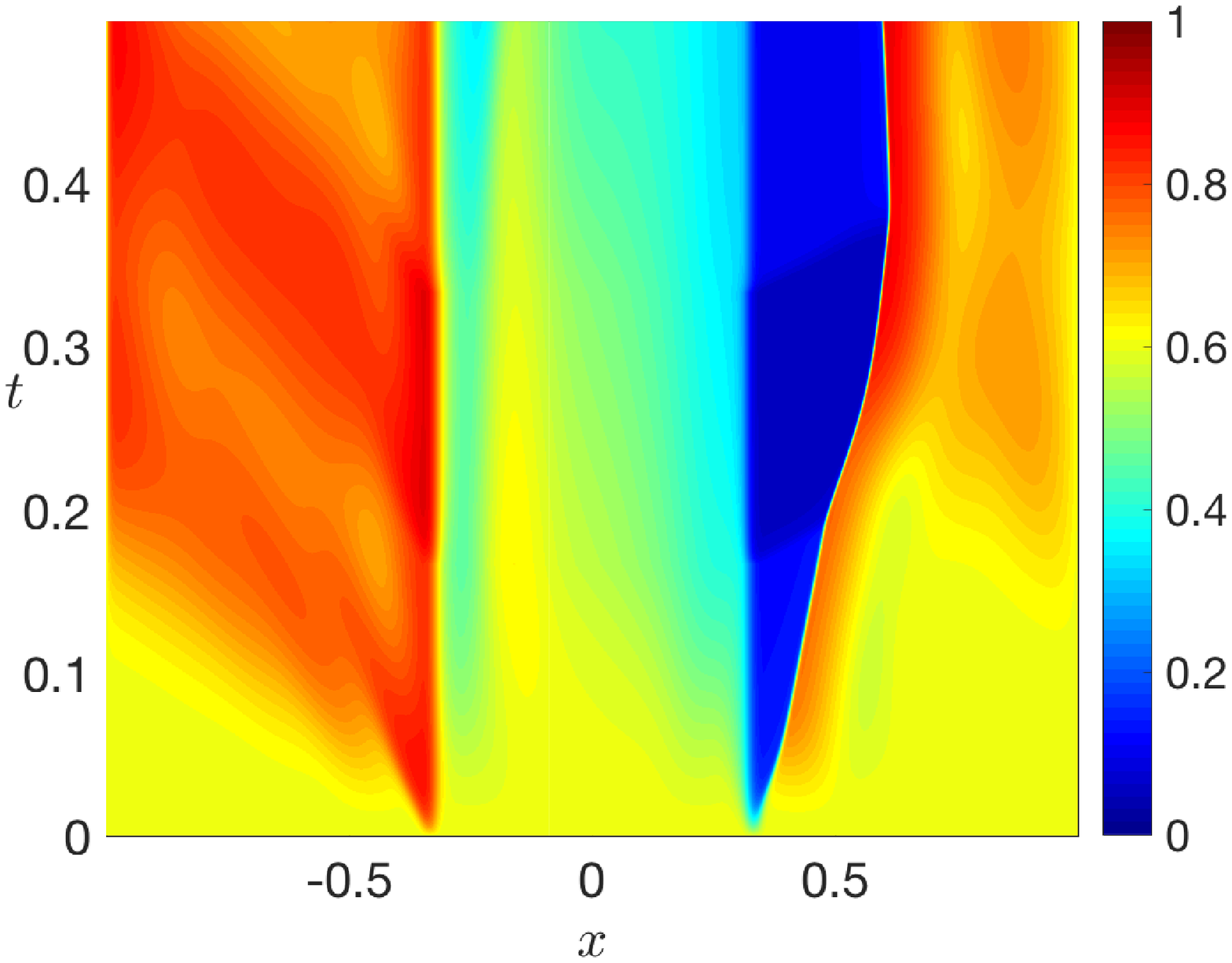}
    \caption{}
    \label{fig:eta02m3}
  \end{subfigure}\quad
  \begin{subfigure}[t]{0.3\textwidth}
    \includegraphics[width=\textwidth, trim = 15 5 35 20, clip=true]{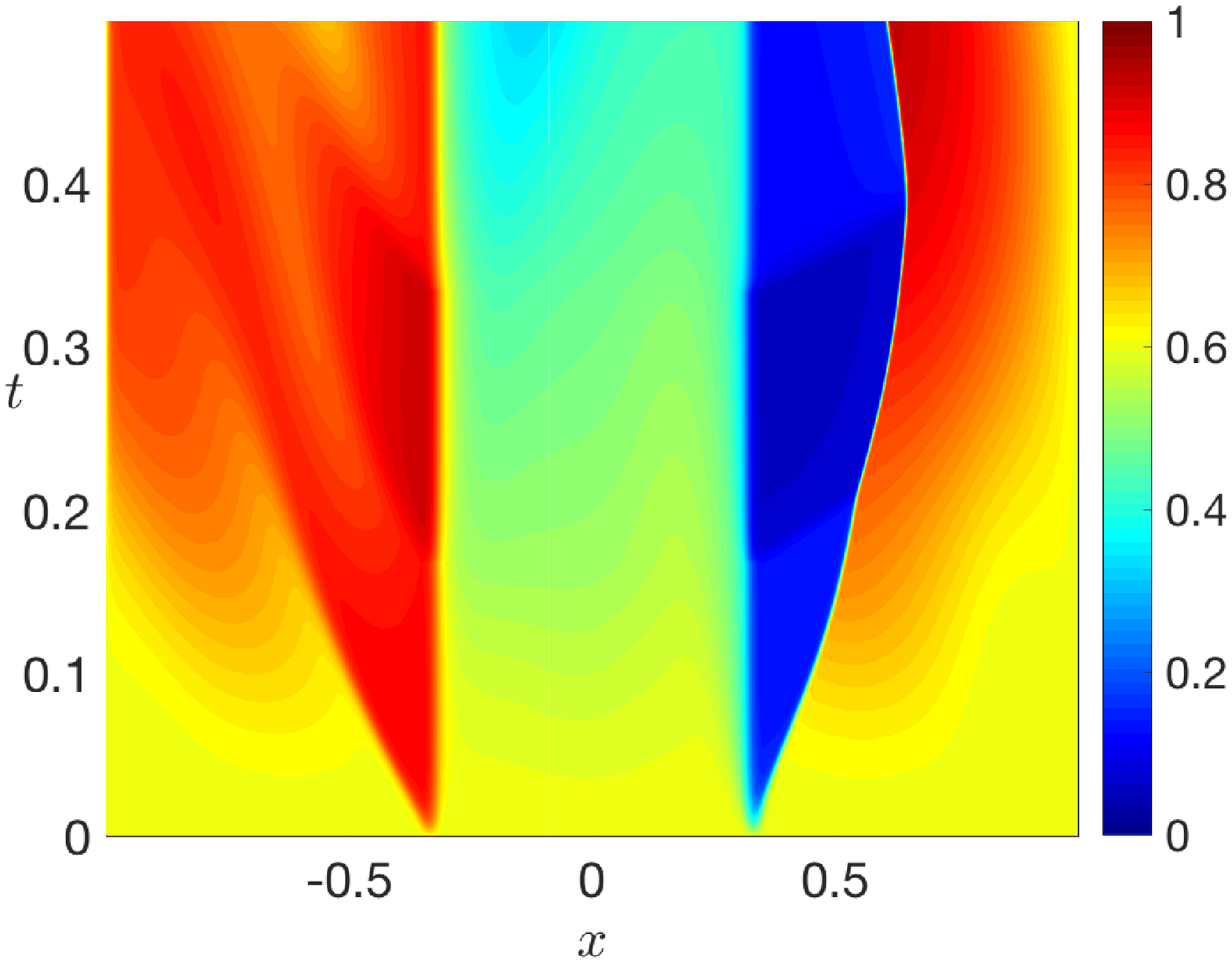}
    \caption{}
    \label{fig:eta05m3}
  \end{subfigure}
  \quad
  \begin{subfigure}[t]{0.3\textwidth}
    \includegraphics[width=\textwidth, trim = 15 5 35 20, clip=true]{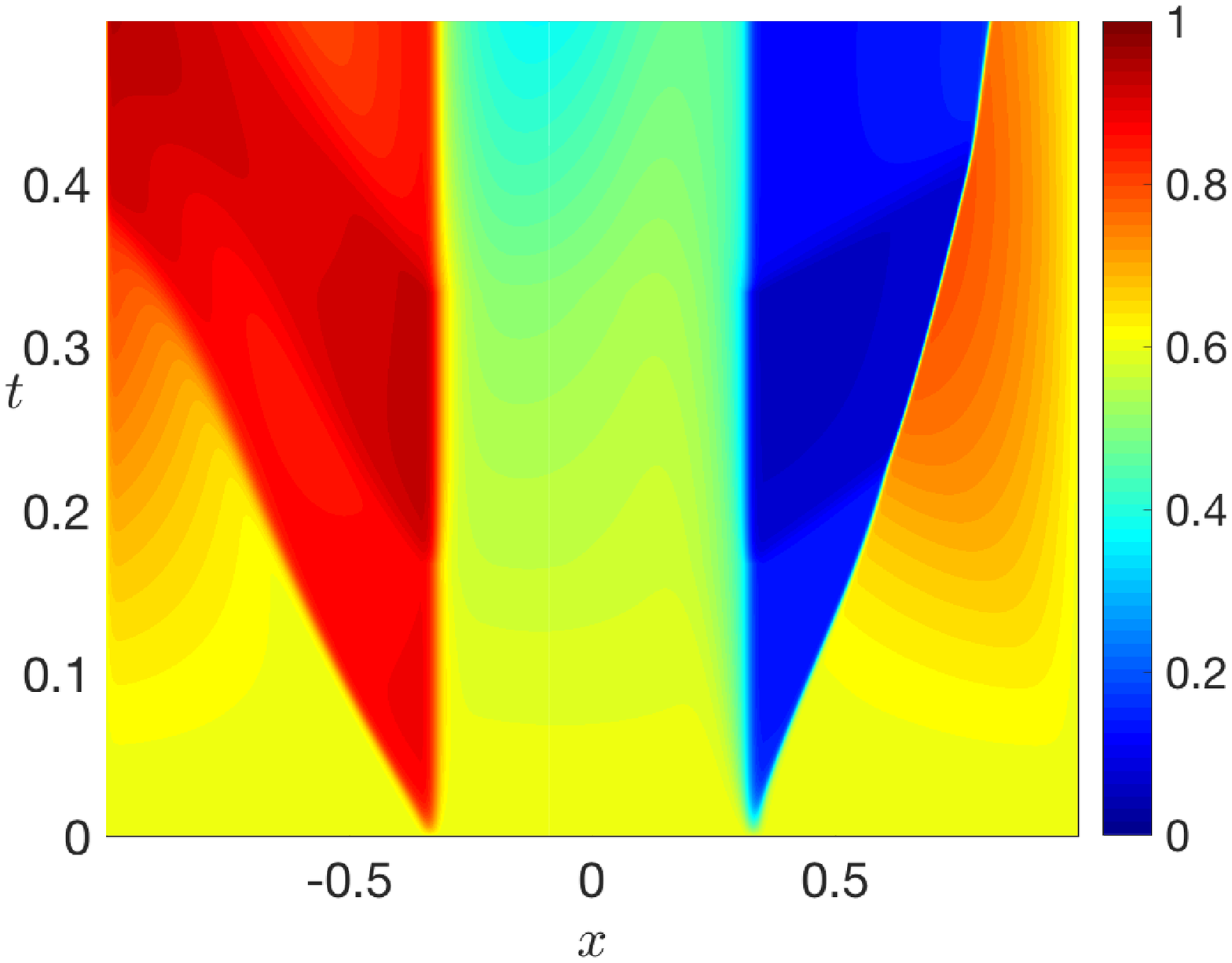}
    \caption{}
    \label{fig:eta1}
  \end{subfigure}
  \caption{$(t,x)$-plots of the solution
    to~\eqref{eq:sample}--\eqref{eq:kernel}, for $m=3$ and $\delta=0$,
    and, from the left, $\eta=0.2, \, 0.5, \, 1$.}
\label{fig:etaEx}
\end{figure}

\begin{figure}[!h]
  \centering
  \begin{subfigure}[t]{0.3\textwidth}
    \includegraphics[width=\textwidth, trim = 15 5 35 20, clip=true]{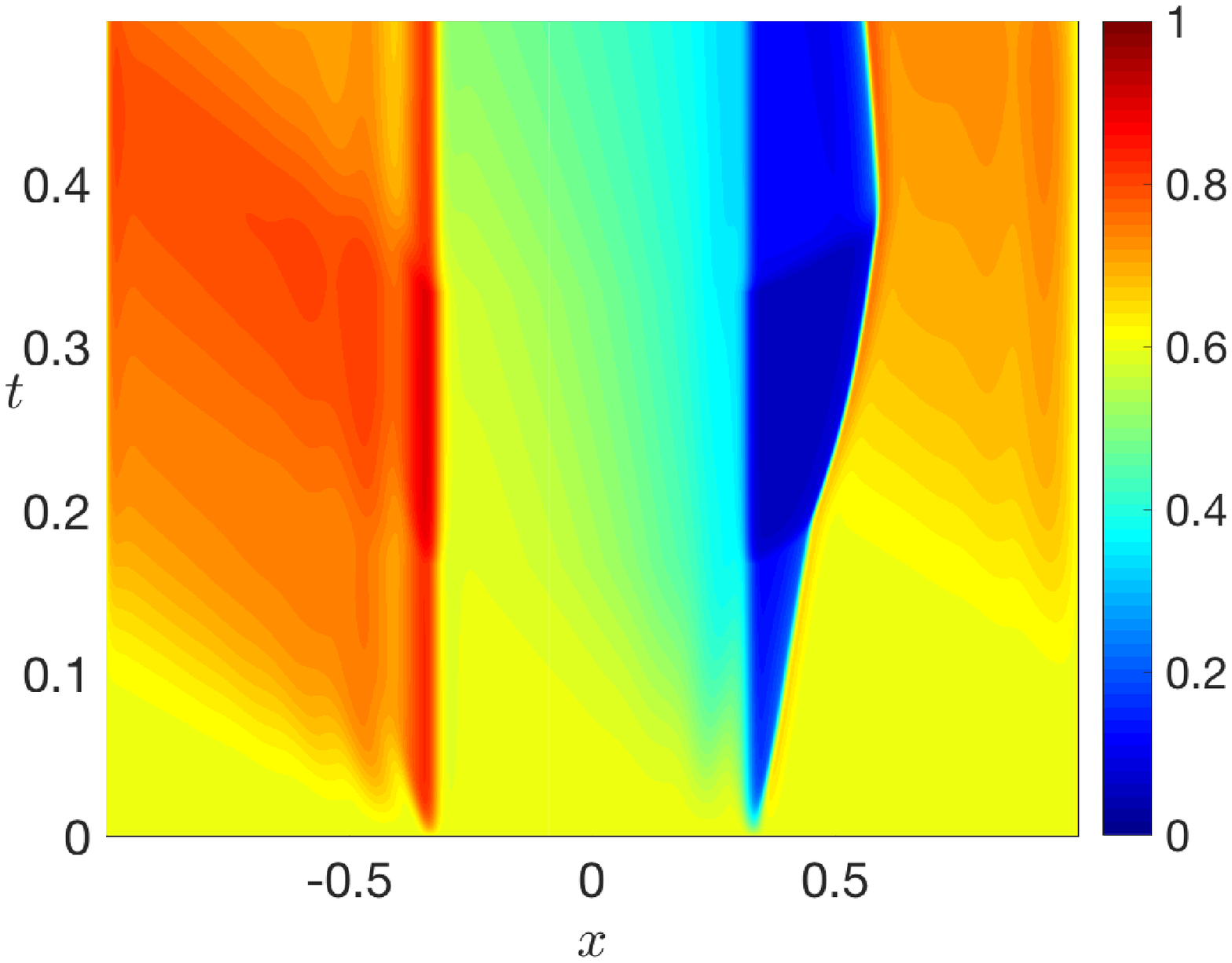}
    \caption{}
    \label{fig:delta04}
  \end{subfigure}\quad
  \begin{subfigure}[t]{0.3\textwidth}
    \includegraphics[width=\textwidth, trim = 15 5 35 20, clip=true]{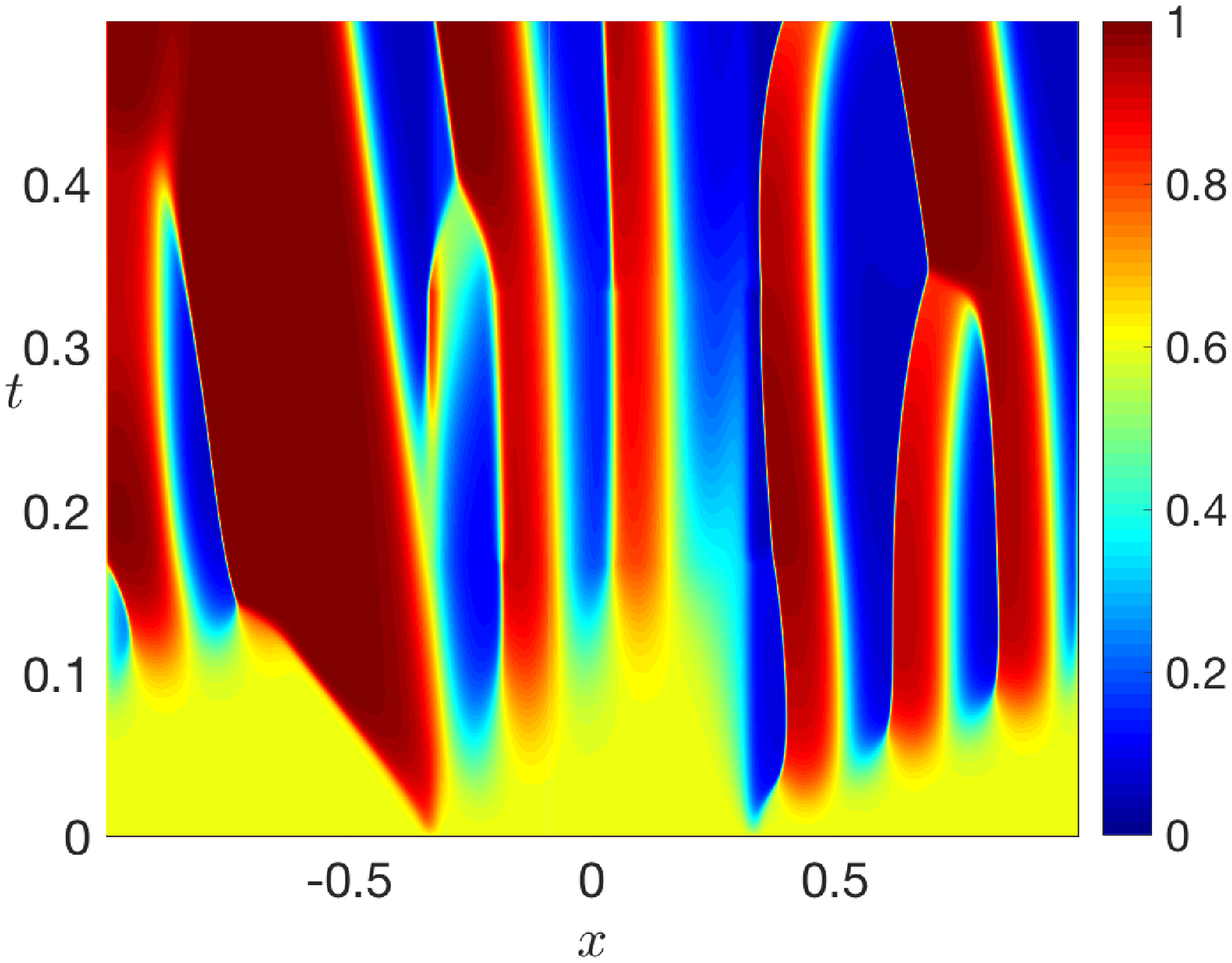}
    \caption{}
    \label{fig:delta06}
  \end{subfigure}\quad
  \begin{subfigure}[t]{0.3\textwidth}
    \includegraphics[width=\textwidth, trim = 15 5 35 20, clip=true]{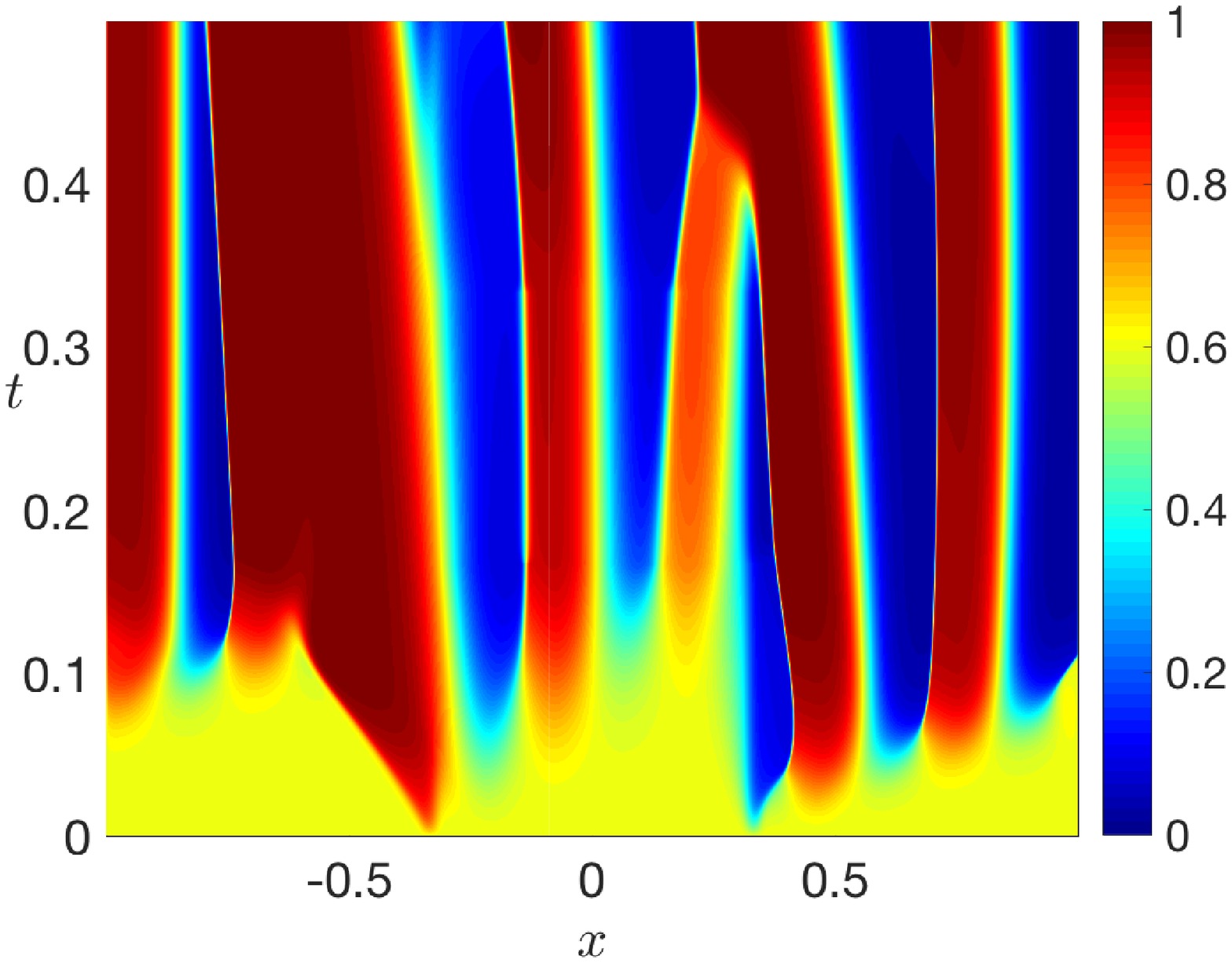}
    \caption{}
    \label{fig:delta10}
  \end{subfigure}
  \caption{$(t,x)$-plots of the solution
    to~\eqref{eq:sample}--\eqref{eq:kernel}, for $m=3$ and $\eta=0.1$,
    and, from the left, $\delta=-0.04, \, 0.06, \, 0.08$.}
\label{fig:deltaEx}
\end{figure}

\begin{figure}[!h]
  \centering
  \begin{subfigure}[t]{0.3\textwidth}
 \includegraphics[width=\textwidth, trim = 15 5 35 20, clip=true]{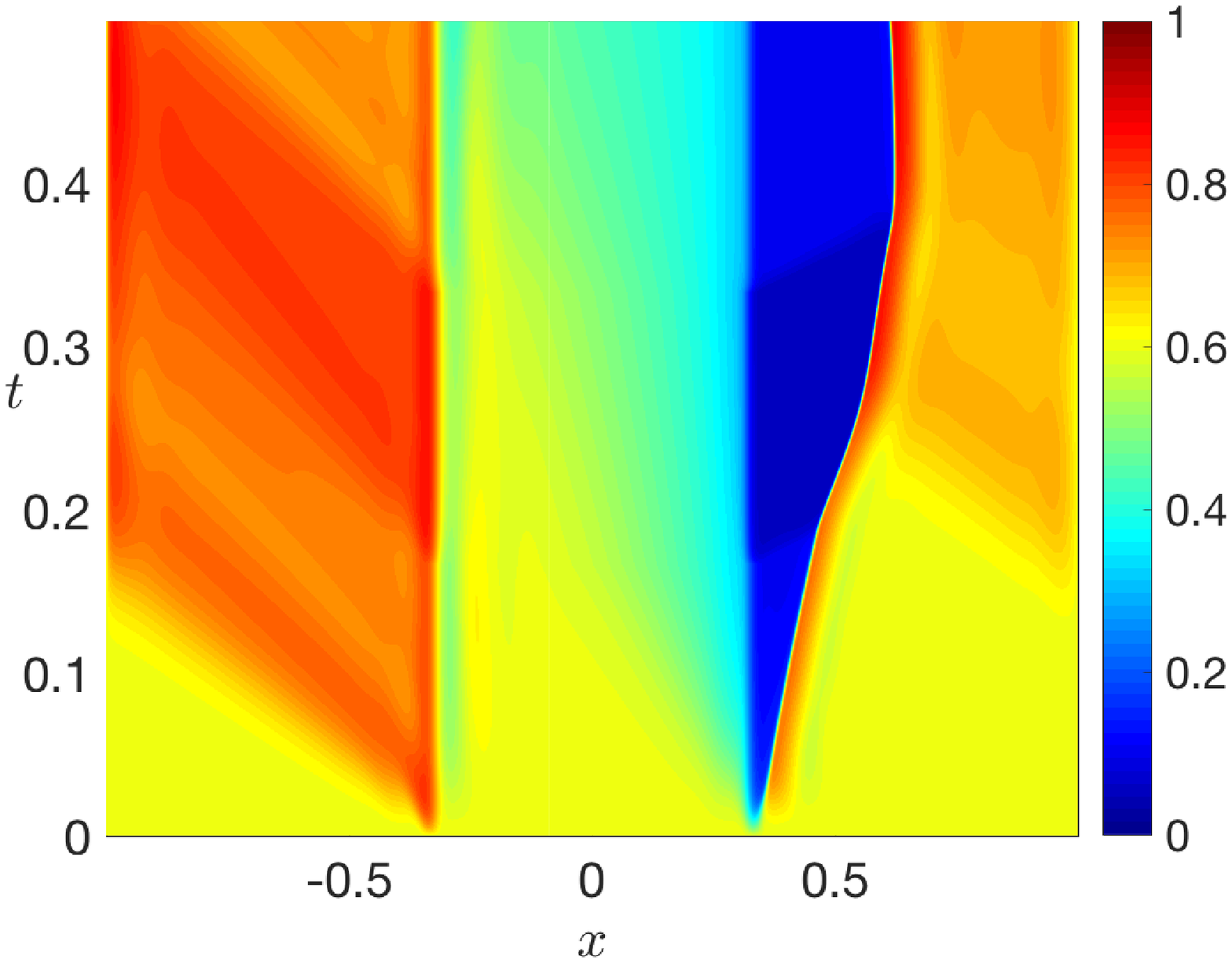}
    \caption{}
    \label{fig:eta01m3}
  \end{subfigure}  \quad
  \begin{subfigure}[t]{0.3\textwidth}
    \includegraphics[width=\textwidth, trim = 15 5 35 20, clip=true]{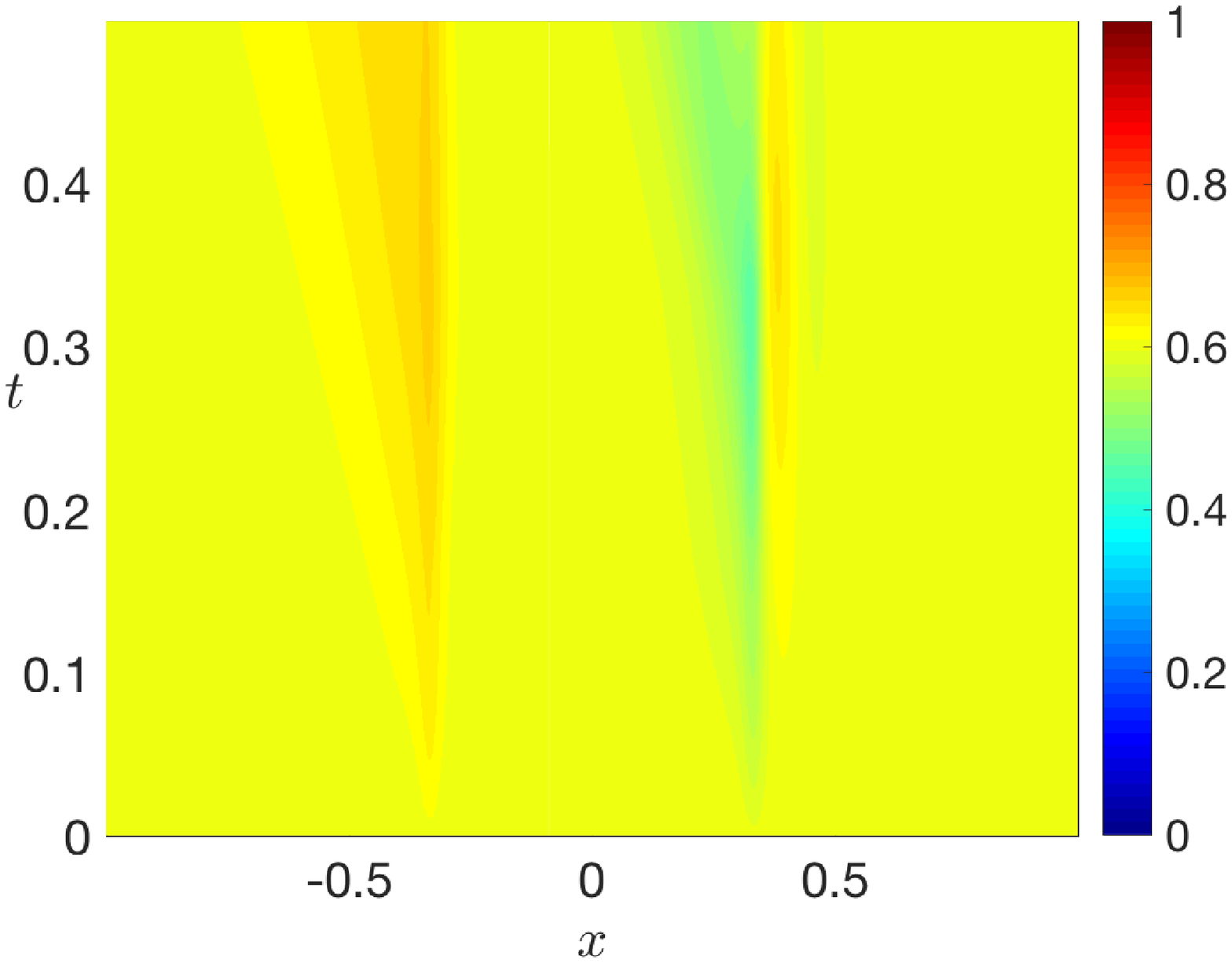}
    \caption{}
    \label{fig:eta01m10}
  \end{subfigure}
   \caption{$(t,x)$-plots of the solution
    to~\eqref{eq:sample}--\eqref{eq:kernel}, for $\eta=0.1$, $\delta=0$,
    and $m = 3$ on the left, $m=10$ on the right.}
  \label{fig:mEx}
\end{figure}

{ \small

  \bibliography{nonlocal}

  \bibliographystyle{abbrv}

}

\end{document}